\theoremstyle{definition}
\newtheorem{theorem}{Theorem}
\theoremstyle{definition}
\newtheorem{remark}{Remark}
\theoremstyle{definition}
\newtheorem{definition}{Definition}
\newtheorem{lemma}{Lemma}
\theoremstyle{definition}
\journal{XXX.} 
\begin{document}
\begin{frontmatter}
	\title{Model order reduction for discrete time-delay systems with
	inhomogeneous initial conditions}
	\author[home]{Xiaolong Wang\corref{cor1}}
	\ead{xlwang@nwpu.edu.cn}
	\author[home]{Kejia Xu}
	\cortext[cor1]{Corresponding author}
	\address[home]{School of Mathematics and Statistics,
		Northwestern Polytechnical University, Xi'an 710072, China}
	\begin{abstract}
		We propose two kinds of model order reduction methods for discrete time-delay 
		systems with inhomogeneous initial conditions.
		The peculiar properties of discrete Walsh functions are directly utilized to 
		compute the Walsh coefficients of the 
		systems, and the projection matrix is defined properly to generate reduced models 
		by taking into account the non-zero initial conditions. It is shown that reduced 
		models can preserve some Walsh coefficients of the expansion of the original 
		systems. Further, the superposition principle is exploited to achieve a 
		decomposition of the original systems, and a new definition of Gramians is 
		proposed by combining the individual Gramians of each subsystem. As a result, the 
		balanced truncation method is applied to systems with inhomogeneous
		initial conditions. We also provide a low-rank approximation to Gramians 
		based on the discrete Laguerre polynomials, which enables an efficient execution 
		of 
		our approach. Numerical examples confirm the feasibility and effectiveness of the 
		proposed methods.
	\end{abstract}
	
	\begin{keyword}
		{Model order reduction, Discrete time-delay systems, Inhomogeneous initial condition,
		Balanced truncation}
	\end{keyword}
\end{frontmatter}


\section{Introduction}\label{sec:introduction}

Time-delay systems (TDSs) permeate a multitude of fields to describe the process 
occurring in non-instantaneous manner, such as biology,
physics, and engineering sciences \cite{seuret2016delays, 
fridman2014introduction}. In practice, high order TDSs arise frequently in the accurate 
description of complicated physical phenomena, which pose a great challenge for the 
system analysis and numerical computation. Model order reduction (MOR) is a powerful tool 
to 
reduce large-scale systems into a more manageable low-dimensional framework. It
preserves the core properties of the original systems while 
diminishes dramatically the computational burden and data storage in the simulation.

In recent years, a plenty of MOR strategies have been extensively studied, including 
moment-matching method \cite{rafiq2022model}, 
balanced truncation (BT) \cite{gosea2018balanced, gosea2022data}, and the kind of 
data-driven methodologies \cite{Antoulas2020,rathinam2003new, proctor2016dynamic}. 
Nonetheless, the dominant MOR techniques are predicated on 
the assumption of systems with zero initial conditions, which may incur large errors 
when the systems with inhomogeneous initial conditions are simplified via these 
kinds 
of MOR methods. The specific schemes oriented to large-scale systems with non-zero 
initial conditions deserve to be future exploited.

In this work, we consider discrete TDSs governed by the difference equations
\begin{equation}\label{tds_orig}  
	\left\{ \begin{aligned}
		&x\left( {t + 1} \right) = A_0x\left( t \right) + A_1x\left( {t - d} \right) + 
		Bu\left( t \right),\\
		&y\left( t \right) = Cx\left( t \right),t \in \mathbb{Z}\left[ {0,\infty } 
		\right)\\
		&x\left( t \right) = \varphi \left( t \right),t \in \mathbb{Z}\left[ {- d,0} 
		\right]
	\end{aligned} \right. 
\end{equation}
where $u(t) \in \mathbb{R}^m$ is the input, $y(t) \in\mathbb{R}^p$ is the output, and 
$x(t) \in \mathbb{R}^n$ represents the state vector. The above systems are determined by 
the constant real matrices 
$A_0$, $A_1$, $B$ and $C$ with compatible dimensions, along with the time delay 
$d$ and the initial conditions $\varphi
\left( t \right)$. We aim to construct reduced models
\begin{equation}\label{tds_redu}
	\left\{ \begin{aligned}
		&{\hat x}\left( {t + 1} \right) = {\hat A_{0}}{\hat x}\left( t \right) + 
		{\hat A_{1}}{\hat x}\left( {t - d} \right) + {\hat B}u\left( t \right), \hfill \\
		&{\hat y}\left( t \right) = {\hat C}{\hat x}\left( t \right),t \in 
		\mathbb{Z}\left[ 
		{0,\infty } \right) \hfill\\
		&{\hat x}\left( t \right) = \hat \varphi \left( t \right),t \in \mathbb{Z}\left[ 
		{ - 
		d,0} \right] \hfill \\ 
	\end{aligned} \right.
\end{equation}
where the state $\hat x (t) \in \mathbb{R}^r$, $u(t) \in \mathbb{R}^m$ and $\hat y(t) \in 
\mathbb{R}^p$, respectively. Here the coefficient 
matrices $\hat A_{0} = W^{\mathrm T}A_0V$, $\hat A_{1} = W^{\mathrm T}A_1V$, $\hat B= 
W^{\mathrm 
T}B$ and $\hat C= CV$ are determined by the projection matrices
$V, W \in \mathbb{R}^{n \times r}$. Although a single delay is contained in 
(\ref{tds_orig}), all results obtained in this paper can be applied readily to the 
systems with multiple delays via some proper modification. 

There exist several MOR methods for TDSs with zero initial values. The  
moment-matching methods have been well studied for TDSs, and the greedy 
iterations proposed in \cite{Scarciotti2016, Alfke2021} can be employed to select the 
expansion points in the frequency domain. Instead of Taylor 
expansion, a higher order Krylov subspace algorithm based on a Laguerre 
expansion technique is considered for MOR of TDSs in  
\cite{samuel2014model,wang2024model}. Besides, the time domain reduction techniques are 
built up for TDSs with the aid of the nice properties of orthogonal polynomials 
\cite{wang2020time,xu2023model}. BT methods have also been extended to 
TDSs via properly defined Gramians and Lyapunov-type equations 
\cite{Breiten2016,wang2022balanced}. With the low-rank approximation to Gramians, one can 
obviate solving the Lyapunov-type equations directly and carry out the procedure of BT 
with relatively lower costs \cite{Jarlebring2011,wang2024model}, while the desired 
properties of BT, such as the preservation of stability and the strict error bound, are 
still in its infancy for TDSs \cite{Wouw2015,Lordejani2020}. In 
\cite{michiels2011krylov,zhang2013memory}, TDSs are first reformulated equivalently 
as infinite-dimensional linear systems by spectral discretization, and then a 
finite-dimensional 
approximation via Krylov-P\'ade model reduction approach is used to construct delay-free 
reduced models. However, ignoring effects coming from time delays may cause large 
errors in the time domain. In \cite{Saadvandi2012}, the dominant pole algorithm is 
adapted properly to a class of second order delay systems, and the reduced models can 
capture the dominate poles of the original systems. In addition, the problem of 
$H_{\infty}$ has been exploited for TDSs. Although reduced models that preserve the 
stability and satisfy a prescribed $H_{\infty}$ performance can be obtained via an 
optimization problem subject to linear matrix inequality constraints, the huge 
computational burden of the optimization problem precludes the application to large-scale 
settings \cite{Du2016,Wu2009}.

MOR with inhomogeneous initial conditions has garnered significant attention in recent 
times. In 
\cite{heinkenschloss2011balanced}, BT methods are first extended successfully to linear 
systems with inhomogeneous 
initial conditions via adding auxiliary inputs determined by the initial data. Later, BT 
with singular perturbation approximation is 
investigated similarly, and an $L_2$ error 
bound is derived for the approximation \cite{Daraghmeh2019}. A new alternative balancing 
procedure is also 
provided in \cite{Schroder2023} based on a shift transformation, where a priori 
parameter-dependent 
error bound is proved. Another strategy for MOR of systems with inhomogeneous initial 
conditions attributes to the decomposition of the full system response into the response 
map of several subsystems, and a new flexible MOR framework is achieved by reducing each 
subsystem separately \cite{Beattie2017}. This approach is also applicable to second order 
systems with inhomogeneous 
initial 
conditions with the help of the superposition principle 
\cite{przybilla2024model}. The time domain MOR techniques are modified by taking the 
initial data into the construction of projection matrices to ensure an accurate 
approximation in
\cite{song2017arnoldi}. With the same spirit, these methods have been applied to more 
general systems, such as 
port-Hamiltonian systems and bilinear systems with non-zero initial conditions 
\cite{li2022model,cao2021model}.

To the best of our knowledge, there is no dedicated work on the exploration of MOR for 
discrete TDSs with inhomogeneous initial conditions. 
Note that (\ref{tds_orig}) can be reformulated as a standard linear system.  
By defining 
\begin{equation*}
	z(t)=\left[\begin{array}{cccc}x(t)^{\mathrm T}&x(t-1)^{\mathrm 
	T}&\cdots&x(t-d)^{\mathrm T}\end{array}\right]^{\mathrm T}\in\mathbb{R}^{n(d+1) 
	\times 1},
\end{equation*}
we have
\begin{equation}\label{tds_to_linear}
	\left\{ \begin{aligned}
		&z\left( {t + 1} \right) = \bar Az\left( t \right) + \bar Bu\left( t \right),\\
		&y\left( t \right) = \bar Cz\left( t \right),t \in \mathbb Z[0,\infty),\\
		&z\left( 0 \right) = \left[\begin{array}{cccc}x(0)^{\mathrm T}&x(-1)^{\mathrm 
		T}&\cdots&x(-d)^{\mathrm T}\end{array}\right]^{\mathrm T},
	\end{aligned} \right.
\end{equation}
where the coefficient matrices are
\begin{equation*}
	\bar A = \left[ {\begin{array}{*{20}{c}}
			{{A_0}}&{}&{}&{{A_1}}\\
			{{I_n}}&O&{}&{}\\
			{}& \ddots & \ddots &{}\\
			{}&{}&{{I_n}}&O
	\end{array}} \right],\bar B = \left[ {\begin{array}{c}
			B\\
			0\\
			\vdots \\
			0
	\end{array}} \right],\bar C = {\left[ {\begin{array}{c}
				{{C^\mathrm{T}}}\\
				0\\
				\vdots \\
				0
		\end{array}} \right]^{\mathrm T}}.
\end{equation*}
That is, a discrete TDS is recast into a framework amenable to standard 
MOR techniques, such as the approaches presented in  
\cite{heinkenschloss2011balanced,wang2022discrete}. However, it is necessary to construct 
a high-dimensional equivalent system, which 
increases dramatically the computational expenditure during the MOR procedure.

We propose a pair of MOR methodologies to simplify discrete 
TDSs with non-zero initial conditions. 
Using the inherent properties of the discrete Walsh function, we expand the system states 
over discrete Walsh functions. The expansion coefficients are determined by solving a 
linear equation, which takes into account the state equations as well as the 
initial conditions. We define the projection matrix via the expansion coefficients and  
derive reduced models that preserve some Walsh coefficients of the original systems.
Furthermore, we decompose discrete 
TDSs into several subsystems via the superposition principle. For each 
subsystem, 
the controllability and observability Gramians are properly defined, which lead to the 
whole Gramians 
for discrete TDSs with inhomogeneous initial conditions. We also provide a
scheme to calculate Gramians approximately via the 
expansion of system fundamental matrix over discrete 
Laguerre polynomials. Consequently, the entire procedure of BT for TDSs with 
inhomogeneous initial conditions is highly efficient.

The reminder of this paper is structured as follows. 
Section 2 provides a succinct review of discrete Walsh functions and discrete 
Laguerre polynomials.
A novel MOR method based on Walsh functions is introduced in Section 3, and 
the property of matching Walsh coefficients is proved.
In Section 4, Gramians of discrete TDSs with non-zero initial 
conditions are defined properly. A procedure for computing low-rank approximations on 
Gramians is also 
presented to enable an efficient execution of BT methods.
In Section 5, three numerical examples are simulated to validate the effectiveness of the 
proposed methodologies.

\section{Preliminaries}\label{sec:sec-3}

We briefly introduce several important properties of the discrete Walsh 
functions and the discrete Laguerre polynomials, which benefit a lot the presentation 
of our main results.

\subsection{Discrete Walsh functions}

Walsh functions are a complete and orthogonal set on the normalized interval (0,1), 
which consist of trains of square pulses.
The corresponding discrete Walsh 
functions are also known as Walsh sequences or Walsh codes \cite{walsh1923closed, 
kak1974binary}. Given a 
positive integer $l$, the $i$-th discrete Walsh function $W_i(k)$ is defined at $N = 2^l$ 
points for $i,k < N$. There are binary formulations of $i$ and $k$ as follows
\begin{equation*}
	(i)_{\text{decimal}} = (i_{m-1} i_{m-2} \cdots i_0)_{\text{binary}}, \\
	(k)_{\text{decimal}} = (k_{m-1} k_{m-2} \cdots k_0)_{\text{binary}}.
\end{equation*}
The discrete Walsh functions have the explicit expression  
\begin{equation*}\label{walsh}
	{W _i}\left( k \right) = {\left( { - 1} \right)^{\left[ {\sum\nolimits_{j = 0}^{m - 
	1} 
				{{g_j}\left( i \right){k_j}} } \right]}}, \quad i = 0,1,\cdots,N-1,
\end{equation*}
where $g_0(i)=i_{m-1}, g_1(i)=i_{m-1}+i_{m-2},\cdots,g_{m-1}(i)=i_1+i_0.$ The discrete 
Walsh functions $W_i (k)$ satisfy the orthogonal 
property
\begin{equation}\label{walsh_orth}
	\sum\limits_{k = 0}^{N - 1} {{W _i}\left( k \right){W _j}\left( k \right)}  = 
	N{\delta _{ij}}, 
	\quad i,j=0,1,\cdots,N-1,
\end{equation}
where $\delta _{ij}$ is the Kronecker delta. Each bounded signal sequence $z(k) \in 
\mathbb{R}^n (k=0,1,\cdots,N-1)$ can be expanded in terms of 
discrete Walsh functions as
\begin{equation*}
	z\left( k \right) = \sum\limits_{i = 0}^{N - 1} {{z_i}{W_i}\left( k \right)} = ZW(k),
\end{equation*}
where $W \left( k \right) = {\left[ {{W_0}\left( k \right) \enspace {W_1}\left( k \right) 
\enspace \cdots \enspace {W_{N - 1}}\left( k \right)} \right]^\mathrm T}$ is referred as 
the discrete Walsh vector, and $Z = [z_0 
\enspace z_1 \enspace \cdots \enspace 
z_{N-1}]$ denotes the coefficient matrix. 
It follows from (\ref{walsh_orth}) that the elements of $Z$ can be calculated by
$$ z_i = \frac{1}{N}\sum\limits_{k = 0}^{N - 1} {{z(k)}{W_i}\left( k 
\right)}, i = 0,1,\cdots,N-1.$$

Let $\mathcal W=[W(0) \enspace W(1) \enspace \cdots \enspace W(N-1)]$ denote the 
discrete Walsh matrix.  
It can be verified that $\mathcal W$ is a symmetry matrix and there holds $\mathcal 
W^{-1}=(1/N)\mathcal W$ \cite{lewis1991walsh}. 
In addition, the discrete Walsh vector satisfies the following shift property 
\begin{equation}\label{walsh_shift_trans}
	R W(0) = W(N-1), R W(k+1)=W(k), k = 0,1,\cdots,N-2,
\end{equation}
where $R$ is named as Walsh shift matrix or backward operational matrix.  The shift 
property has the expression of matrix form 
\begin{equation*}
	[W(N-1) \enspace W(0) \enspace \cdots \enspace W(N-2)]=R[W(0) \enspace W(1) \enspace 
	\cdots \enspace W(N-1)], 
\end{equation*}
which implies that
\begin{equation*}
	\begin{aligned}
		R&=[W(N-1) \enspace W(0) \enspace \cdots \enspace W(N-2)]\mathcal 
		W^{-1}\\&=\frac{1}{N}[W(N-1) \enspace W(0) \enspace \cdots \enspace 
		W(N-2)]\mathcal W.
	\end{aligned}
\end{equation*}
Further there holds the Walsh summation property
\begin{equation}\label{walsh_sum}
	\sum\limits_{i = 0}^{k} {W\left( i \right)}=SW \left( k \right),
\end{equation}
where $S$ is referred as the operational matrix for summation \cite{chou1986simple}. The 
$(i+1, j+1)$-th element of $S$ is determined by 
$$\displaystyle 
s_{ij}=\frac{1}{N}\left[\sum\limits_{k=0}^{N-1}\sum\limits_{l=0}^{k}W_{i}(l)W_{j}(k)\right].$$

\subsection{Discrete Laguerre polynomials}

The $i$-th discrete Laguerre polynomial $L_i(k)$ is defined as
\begin{equation*}\label{time_Lag}
	L_i(k)=(-1)^i\beta_io^k\alpha_i(k),\quad i,k=0,1,\cdots,
\end{equation*}
where $\displaystyle 
\alpha_i(k)=s^i\sum_{j=0}^i\left(\frac{s-1}s\right)^j\binom{i}{j}\binom{k}{j}, 
\beta_i=\left(\frac{1-s}{s^i}\right)^{1/2}, o=s^{1/2}$, 
and $s\in(0,1)$ is called the discount factor \cite{king1977digital}. Here $\displaystyle 
\binom ij$ and 
$\displaystyle \binom kj$ are the binomial coefficients.
$L_i(k)$ forms an orthonormal basis in $l_2(\mathbb{R}_+)$, and there is the 
orthogonality property
\begin{equation}\label{Lag_time_orth}
	\sum\limits_{k = 0}^\infty  {{L_m}\left( k \right){L_n}\left( k \right)}  = {\delta _{mn}},\quad m,n = 0,1, \cdots .
\end{equation}
Discrete Laguerre polynomials satisfy the following three-term recursive relation
\begin{equation*}
	L_{i+1}(k)=-a_i(k)s^{-1/2}L_i(k)+s^{-1}b_iL_{i-1}(k),
\end{equation*}
where $\displaystyle a_{i}(k) =\frac1{i+1}\left(i+(i+1)s+(s-1)k\right),  b_{i} =-\frac{is}{i+1}. $

For a given function $f(k) \in l_2(\mathbb{R}_+)$, it has the expansion $f\left( k 
\right)=\sum\limits_{i = 0}^{\infty} {{f_i}{L_i}\left( k \right)}$, where the Laguerre 
coefficient can be obtained by using orthogonal property (\ref{Lag_time_orth}), that is 
\begin{equation*}
	{f_i} = \sum\limits_{k = 0}^\infty  {f\left( k \right){L_i}\left( k \right)} , i = 
	0,1,\cdots.
\end{equation*}
Note that $f_i$ are also referred as the Laguerre spectrum in 
the existing works. The truncated series $\hat f\left( k \right)=\sum\limits_{i 
= 0}^{N - 1} 
{{f_i}{L_i}\left( k \right)}$ are optimal in the sense of minimizing the error 
\begin{equation*}
	\varepsilon=\left\|f(k)-\sum_{i=0}^{N-1}\tilde f_iL_i(k)\right\|_2,
\end{equation*}
where $\|\cdot\|_2$ denotes the $l_2$ norm, and $\tilde f_i\in \mathbb R$. Let $L\left( k 
\right)= {\left[ {{L_0}\left( k \right) \enspace {L_1}\left( k \right) 
\enspace \cdots \enspace {L_{N - 1}}\left( k \right)} \right]^\mathrm T}$. The basis 
vector $L\left( {k + \varsigma } \right)$ is related with $L(k)$ by the invertible  
transformation
\begin{equation}\label{shift_trans}
	L\left( {k + \varsigma } \right) = {T^\varsigma }L\left( k \right),
\end{equation}
where the shift-transformation matrix $T$ takes on the lower triangular form
\begin{equation*}\label{shift_trans_mat}
	T = \left[ {\begin{array}{cccccc}
			o&&&& \\ 
			1-s&o& && \\ 
			{\left( { - o} \right)(1-s)}&1-s& \ddots&& \\ 
			\vdots & \vdots &  \ddots & o &   \\ 
			{{{\left( { - o} \right)}^{N - 2}}(1-s)}&{{{\left( { - o} \right)}^{N - 
			3}}(1-s)}& \cdots&1-s&o 
	\end{array}} \right].
\end{equation*}

\section{MOR based on the expansion over Walsh functions }

In this section, we present an efficient MOR method based on discrete Walsh functions. We 
prove that reduced models can preserve a 
certain number of discrete Walsh coefficients of the original systems.

For given bounded non-zero initial values $x(j),j\in[-d,0]$ in (\ref{tds_orig}), we have 
the expansion over discrete Walsh functions 
\begin{equation}\label{xi_Qi}
	x(j)=[x(j) \quad 0 \quad \cdots \quad 0]W(N-1)=Q_jW(N-1),
\end{equation}
where $Q_j\in \mathbb{R}^{n\times N}$. The summation of the state equation in 
(\ref{tds_orig}) from $0$ to $N-1$ (assuming without loss of generality that $N > d$) 
has the expression
\begin{equation*}
	\sum\limits_{i = 0}^{N - 1} {x\left( {i + 1} \right)}  = {A_0}\sum\limits_{i = 0}^{N - 1} {x\left( i \right)}  
	+ {A_1}\sum\limits_{i = 0}^{N - 1} {x\left( {i - d} \right)}  + B\sum\limits_{i = 0}^{N - 1} {u\left( i \right)}.
\end{equation*}
By isolating the initial values, the above equality is rewritten as
\begin{equation}\label{equa_sum}
	\sum\limits_{i = 0}^{N - 1} {x\left( {i + 1} \right)}  = {A_0}x\left( 0 \right) + 
	{A_0}\sum\limits_{i = 1}^{N - 1} {x\left( i \right)}  + {A_1}\sum\limits_{i = 0}^d {x\left( {i - d} \right) + } 
	{A_1}\sum\limits_{i = d + 1}^{N - 1} {x\left( {i - d} \right)}  + B\sum\limits_{i = 0}^{N - 1} {u\left( i \right)}.
\end{equation}
We consider the truncated expansion $x(i+1) 
\approx 
\mathcal{X}W(i)$ of the state in (\ref{tds_orig}), where $\mathcal{X} \in \mathbb{R}^{n 
\times N}$ is the discrete Walsh coefficient matrix. Similarly, the truncated expansion 
of $u(i)$ reads $u(i)\approx UW(i)$. Consequently, it follows from shift property
 (\ref{walsh_shift_trans}) that 
\begin{equation}\label{A0A1_expan}
\begin{aligned}
	A_0\sum_{i=1}^{N-1}x\left(i\right)&=A_0\sum_{i=1}^{N-1}\mathcal{X}W\left(i-1\right)
	={A_0}\sum\limits_{i = 0}^{N - 1} {\mathcal{X}  R W\left( i \right)}-{A_0}\mathcal{X} 
	R W(0),\\
	A_1\sum_{i=d+1}^{N-1}x\left(i-d\right)&=A_1\sum_{i=d+1}^{N-1}\mathcal{X}W\left(i-1-d\right)\\
	&=A_1\sum_{i=d+1}^{N-1}\mathcal{X}R^{d+1}W\left(i\right)\\
	&={A_1}\sum\limits_{i = 0}^{N - 1} {\mathcal{X} R^{d+1}W\left( i \right)}  - 
	{A_1}\sum_{i=0}^{d}\mathcal{X}R^{N+d-i}W(N-1). 
\end{aligned}
\end{equation}
Substituting (\ref{xi_Qi}) and (\ref{A0A1_expan}) into (\ref{equa_sum}) leads to
\begin{equation*}
	\begin{aligned}
	\sum\limits_{i = 0}^{N - 1} {\mathcal{X} W\left( i \right)}  =& {A_0}{Q_0}W\left( {N - 1} \right) + 
	{A_0}\sum\limits_{i = 0}^{N - 1} {\mathcal{X} R W\left( i \right)}  - {A_0}\mathcal{X} R W(0)  + 
	{A_1}\sum\limits_{i = 0}^d {{Q_{i-d}}W\left( {N - 1} \right) } \\
	&+ {A_1}\sum\limits_{i = 0}^{N - 1} 
	{\mathcal{X} {R^{d + 1}}W\left( i \right)}  - {A_1}\sum\limits_{i = 0}^d {\mathcal{X} {R^{N+d-i}}}W(N-1)  + 
	B\sum\limits_{i = 0}^{N - 1} {UW\left( i \right)} .
\end{aligned}
\end{equation*}
Because of Walsh summation property (\ref{walsh_sum}), the above equality boils down to
\begin{equation*}
	\begin{aligned}
	\mathcal{X} SW\left( {N - 1} \right) =& {A_0}{Q_0}W\left( {N - 1} \right) + {A_0}\mathcal{X} 
	RSW\left( {N - 1} \right) - {A_0}\mathcal{X} R W(0)  + {A_1}\sum\limits_{i = 0}^d 
	{{Q_{i-d}}W\left( {N - 1} \right) } \\
	&+ {A_1}\mathcal{X} {R^{d + 1}}SW\left( {N - 1} \right) - {A_1}\sum\limits_{i = 0}^d 
	{\mathcal{X} R^{N+d-i} W(N-1)}  + 
	BUSW\left( {N - 1} \right).
	\end{aligned}
\end{equation*}
Note that $R W(0)=W(N-1)$. The above equality implies that $\mathcal X$ solves linear 
matrix equation
\begin{equation}\label{equ_solve_X}
	\mathcal{X} S = {A_0}{Q_0} + {A_0}\mathcal{X} R S - {A_0}\mathcal{X}  + {A_1}\sum\limits_{i = 0}^d 
	{Q_{i-d}} + {A_1}\mathcal{X} {R^{d + 1}}S - {A_1}\sum\limits_{i = 0}^d {\mathcal{X} {R^{N+d-i}}}  + BUS.
\end{equation}
One can calculate $\mathcal{X}$ via the following linear equation
\begin{equation*}
	\begin{aligned}
	&\left( {{S^\mathrm{T}} \otimes {I_n} - {{\left( {R S} \right)}^\mathrm{T}} \otimes 
	{A_0} + {I_N} \otimes {A_0} - 
	{{\left( {{R^{d + 1}}S} \right)}^\mathrm{T}} \otimes {A_1} + \sum\limits_{i = 0}^d 
	{\left({R^{N+d-i}}\right)^{\mathrm{T}} \otimes {A_1}} } 
	\right){\rm{vec}}\left( \mathcal{X} \right) \\ 
	=&\left( {{I_N} \otimes {A_0}} \right){\rm{vec}}\left( 
		{{Q_0}} \right) + \sum\limits_{i = 0}^d {\left( {{I_N} \otimes {A_1}} \right){\rm{vec}}\left( 
			{{Q_{i-d}}} \right)}  + \left( {{S^\mathrm{T}} \otimes B} \right){\rm{vec}}\left( U \right),
	\end{aligned}
\end{equation*}
where $\otimes$ is the Kronecker product, and $\text{vec}(\cdot)$ represents the column stacking operator.

Now we are in a position to construct reduced models using projection methods. We choose 
the projection matrix $V$ as an orthogonal basis 
matrix of the following subspace
\begin{equation}\label{comp_V}
	\text{colspan} \{V\}=\text{colspan} \{ \mathcal{X} \quad x(0) \quad \cdots \quad 
	x(-d) \}.
\end{equation}
Note that the number of columns of $V$ may be less than $N+d+1$, and we refer $V\in 
\mathbb{R}^{n\times 
r}$ for simplicity. We make the choice $W=V$ and get Algorithm \ref{alg:nonwalsh} for MOR 
of discrete TDSs with non-zero initial conditions. 
\begin{algorithm}[h]
	\caption{MOR of discrete TDSs based on Walsh functions (MOR-Walsh-TDS)}
	\label{alg:nonwalsh}
	\begin{algorithmic}[1]
		\Require Coefficient matrices $A_0$, $A_1$, $B$, $C$, delay $d$, parameter $N$, 
		and initial conditions $x(j)$ 
		\vspace{1ex}
		\Ensure Reduced coefficient matrices $\hat A_{0}$, $\hat A_{1}$, $\hat B$, $\hat 
		C$, and 
		reduced initial conditions $\hat x(j)$
		\vspace{1ex}
		\State Compute the Walsh coefficient matrix via linear equation 
		(\ref{equ_solve_X});
		\vspace{1ex}
		\State Construct the projection matrix $V$ by (\ref{comp_V}) such that 
		$V^\mathrm{T}V=I_{r};$
		\vspace{1ex}
		\State Generate the coefficient matrices of reduced models 
		$$\hat A_{0} = V^{\mathrm T}A_0V, \hat A_{1} = V^{\mathrm T}A_1V, \hat B= 
		V^{\mathrm T}B, \hat C= CV, \hat x(j) = V^\mathrm{T}x(j).$$
	\end{algorithmic}
\end{algorithm}

\begin{remark}
	Reduced models generated by Algorithm \ref{alg:nonwalsh} are associated with the 
	initial values $x(j), j \in [-d,0]$, as well as the specific input $u(t)$. If $x(j) = 
	0$ for $j 
	\in [-d,0]$, Algorithm \ref{alg:nonwalsh}  
	degenerates naturally to MOR techniques for systems with homogeneous initial 
	conditions. We refer the reader to \cite{wang2020time} for more details.  
\end{remark}

For reduced models, the initial values $\hat 
x(j)$ can also be expressed via discrete Walsh functions 
\begin{equation*}
	\hat x(j)=[\hat x(j) \quad 0 \quad \cdots \quad 0]W(N - 1)=\hat Q(j)W(N - 1), 
\end{equation*}
and the state $\hat x(k + 1)$ has the truncated expansion 
\begin{equation*}
	\hat x(k+1)\approx\hat {\mathcal{X}}W(k), 
\end{equation*}
where $\hat {\mathcal{X}} \in \mathbb{R}^{r \times N}$ is the discrete 
Walsh coefficient matrix. We get the following lemma.

\begin{lemma}\label{lemma_walsh}
	If the projection matrix $V$ satisfies (\ref{comp_V}), then there holds $\mathcal 
	{X}=V\hat 
	{\mathcal{X}}, x(j)=V\hat x(j)$ for $j \in [-d, 0]$, where $\mathcal{X}$ and $\hat 
	{\mathcal{X}}$ are the discrete Walsh coefficient matrices of (\ref{tds_orig}) and 
	(\ref{tds_redu}), respectively. 
\end{lemma}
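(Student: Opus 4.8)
The plan is to prove the two identities in turn, disposing of $x(j)=V\hat x(j)$ first because it is immediate, and then reducing $\mathcal{X}=V\hat{\mathcal{X}}$ to a uniqueness argument for the defining linear equation (\ref{equ_solve_X}). Since Algorithm \ref{alg:nonwalsh} builds $V$ with $V^{\mathrm T}V=I_{r}$, the matrix $VV^{\mathrm T}$ is the orthogonal projector onto $\text{colspan}\{V\}$. By construction (\ref{comp_V}), each initial value $x(j)$, $j\in[-d,0]$, lies in $\text{colspan}\{V\}$, hence $VV^{\mathrm T}x(j)=x(j)$. Because $\hat x(j)=V^{\mathrm T}x(j)$, this gives $V\hat x(j)=VV^{\mathrm T}x(j)=x(j)$, which is the second assertion.

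For the first identity I would exploit that the columns of $\mathcal{X}$ also lie in $\text{colspan}\{V\}$ by (\ref{comp_V}), so $VV^{\mathrm T}\mathcal{X}=\mathcal{X}$. Setting $\tilde{\mathcal{X}}:=V^{\mathrm T}\mathcal{X}$ yields $\mathcal{X}=V\tilde{\mathcal{X}}$, and it remains to show $\tilde{\mathcal{X}}=\hat{\mathcal{X}}$. The idea is to substitute $\mathcal{X}=V\tilde{\mathcal{X}}$ into (\ref{equ_solve_X}) and left-multiply by $V^{\mathrm T}$. Using $V^{\mathrm T}V=I_{r}$ and the reduced matrices $\hat A_{0}=V^{\mathrm T}A_0V$, $\hat A_{1}=V^{\mathrm T}A_1V$, $\hat B=V^{\mathrm T}B$, every factor $V^{\mathrm T}A_0V$ and $V^{\mathrm T}A_1V$ collapses into $\hat A_0,\hat A_1$, and the homogeneous part of (\ref{equ_solve_X}) turns into exactly the operator that the reduced Walsh equation applies to $\hat{\mathcal{X}}$.

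The step I expect to be the crux is matching the inhomogeneous terms. After projection these are $V^{\mathrm T}A_0Q_0$, $V^{\mathrm T}A_1\sum_{i=0}^{d}Q_{i-d}$ and $V^{\mathrm T}BUS$, and they must coincide with the reduced sources $\hat A_0\hat Q_0$, $\hat A_1\sum_{i=0}^{d}\hat Q_{i-d}$ and $\hat B US$. Here I would use that each $Q_j=[\,x(j)\ 0\ \cdots\ 0\,]$ has its single nonzero column $x(j)\in\text{colspan}\{V\}$, so $VV^{\mathrm T}Q_j=Q_j$ and $\hat Q_j=V^{\mathrm T}Q_j$; consequently $\hat A_0\hat Q_0=V^{\mathrm T}A_0VV^{\mathrm T}Q_0=V^{\mathrm T}A_0Q_0$, and likewise for the delayed sum, while $\hat B US=V^{\mathrm T}BUS$ holds trivially. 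This is precisely where (\ref{comp_V}) is indispensable: were the initial data absent from $\text{colspan}\{V\}$, the projected and reduced source terms would not agree.

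With the sources matched, $\tilde{\mathcal{X}}$ and $\hat{\mathcal{X}}$ satisfy one and the same linear matrix equation of the form (\ref{equ_solve_X}) written for the reduced system. Invoking the nonsingularity of the associated Kronecker-product coefficient operator assembled from $\hat A_0,\hat A_1,S,R$ — the same well-posedness that makes $\hat{\mathcal{X}}$ uniquely defined in the first place — I would conclude $\tilde{\mathcal{X}}=\hat{\mathcal{X}}$, whence $\mathcal{X}=V\tilde{\mathcal{X}}=V\hat{\mathcal{X}}$. Apart from this uniqueness assumption, the argument is linear bookkeeping steered by $V^{\mathrm T}V=I_{r}$ and the projector identities $VV^{\mathrm T}x(j)=x(j)$ and $VV^{\mathrm T}Q_j=Q_j$.
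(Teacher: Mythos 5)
Your proposal is correct and follows essentially the same route as the paper's own proof: write $\mathcal{X}=V\tilde{\mathcal{X}}$ and $x(j)=V\hat x(j)$ using (\ref{comp_V}), substitute into (\ref{equ_solve_X}), left-multiply by $V^{\mathrm T}$, recognize the result as the reduced model's Walsh-coefficient equation, and invoke uniqueness of its solution. The only cosmetic difference is that you match the source terms after projection via the identities $VV^{\mathrm T}Q_j=Q_j$, whereas the paper substitutes $Q_j=V\hat Q_j$ before projecting; you are also more explicit than the paper about the nonsingularity assumption underlying the uniqueness step.
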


\begin{proof}
	It follows from (\ref{comp_V}) that 
	$\operatorname{colspan}\{\mathcal{X}\}\subseteq\operatorname{colspan}\{V\}$, 
	$x(j)\in\operatorname{colspan}\{ V\}$ for $j \in [-d,0]$. There is a 
	matrix $\tilde{\mathcal{X}}\in\mathbb{R}^{r\times N}$ such 
	that $ \mathcal{X}=V\tilde{\mathcal{X}}$. Besides, there holds $x(j)=V\tilde {x}(j)$, 
	where $\tilde x(j)\in \mathbb{R}^{r}$. We have $\tilde x(j)=V^{\mathrm 
	T}x(j)=\hat x(j)$, implying that $x(j)=V\hat {x}(j)$. We get $Q(j)=V\hat 
	Q(j)$. 

	Now (\ref{equ_solve_X}) can be reformulated as 
	\begin{equation*}
		V \tilde{\mathcal{X}} S = {A_0}V{\hat Q_0} + {A_0}V \tilde{\mathcal{X}} R S - 
		{A_0}V \tilde{\mathcal{X}}  
		+ {A_1}V\sum\limits_{i = 0}^d {\hat Q_{i-d}} + {A_1}V \tilde{\mathcal{X}} {R^{d + 
		1}}S - {A_1}\sum\limits_{i = 0}^d {V \tilde{\mathcal{X}} {R^{N+d-i}}}  + BUS.
	\end{equation*}
     Multiplying both sides of the above equation from left by $V^\mathrm{T}$, it reads
	\begin{equation*}
		\tilde{\mathcal{X}} S = {\hat A_0}{\hat Q_0} + {\hat A_0}\tilde{\mathcal{X}} R S - {\hat A_0}\tilde{\mathcal{X}}  
		+ {\hat A_1}\sum\limits_{i = 0}^d {\hat Q_{i-d}} + {\hat A_1}\tilde{\mathcal{X}} {R^{d + 1}}S - 
		{\hat A_1}\sum\limits_{i = 0}^d {\tilde{\mathcal{X}} {R^{N+d-i}}}  + \hat BUS,
	\end{equation*}
	which is exactly the linear equation used to calculate the discrete Walsh coefficient 
	matrix for reduced models. Due to the 
	uniqueness of the solution, we conclude that $\hat{\mathcal{X}} = 
	\tilde{\mathcal{X}}$, $\mathcal{X}=V\hat {\mathcal{X}}$, and $x(j)=V\hat x(j)$ for $j 
	\in [-d, 0]$. 
\end{proof}

If the outputs of original systems and reduced models have the truncated expansion  
$y(k)\approx YW(k)$, $\hat{y}(k)\approx \hat{Y}W(k)$, respectively, 
where $Y, \hat Y\in\mathbb{R}^{p\times N}$, we have 
the following theorem.
\begin{theorem}
	If the projection matrix $V$ satisfies (\ref{comp_V}), then reduced models 
	match the first $N$ discrete Walsh coefficients of the output of original systems, 
	that is, $Y=\hat{Y}$. 
\end{theorem}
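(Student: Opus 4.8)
The plan is to reduce the claim to a pointwise identity between the two output sequences. Because $\mathcal W$ is invertible with $\mathcal W^{-1}=(1/N)\mathcal W$, the map sending a length-$N$ sequence to its Walsh coefficient matrix is a bijection; consequently $Y=\hat Y$ holds if and only if $y(k)=\hat y(k)$ for every $k=0,1,\dots,N-1$. So it suffices to establish the pointwise equality $y(k)=\hat y(k)$ on this range and then transport it back to the coefficient matrices.

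First I would translate the conclusion of Lemma \ref{lemma_walsh} into a pointwise statement about the states. Lemma \ref{lemma_walsh} supplies $\mathcal{X}=V\hat{\mathcal{X}}$ together with $x(j)=V\hat x(j)$ for $j\in[-d,0]$. For the index $k=0$ the initial-condition identity $x(0)=V\hat x(0)$ is immediate from the latter. For $k=1,\dots,N-1$ I would invoke the truncated expansions $x(k)\approx\mathcal{X}W(k-1)$ and $\hat x(k)\approx\hat{\mathcal{X}}W(k-1)$, which are the operative length-$N$ Walsh representations of the states, so that
\begin{equation*}
	x(k)=\mathcal{X}W(k-1)=V\hat{\mathcal{X}}W(k-1)=V\hat x(k).
\end{equation*}
Hence $x(k)=V\hat x(k)$ holds uniformly for $k=0,1,\dots,N-1$.

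With this identity in hand, the output match is a one-line computation using $\hat C=CV$:
\begin{equation*}
	y(k)=Cx(k)=CV\hat x(k)=\hat C\hat x(k)=\hat y(k),\qquad k=0,1,\dots,N-1.
\end{equation*}
Applying the injective Walsh transform to the common sequence then yields $Y=\hat Y$, which is the desired conclusion.

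The step I expect to require the most care is the index bookkeeping in the second paragraph. The coefficient matrix $\mathcal{X}$ encodes the \emph{shifted} state through $x(i+1)\approx\mathcal{X}W(i)$, so it carries information about $x(1),\dots,x(N)$, whereas the output coefficients $Y$ are assembled from $x(0),\dots,x(N-1)$. The value $x(0)$ must therefore be supplied separately by the initial-condition part of Lemma \ref{lemma_walsh} rather than by $\mathcal{X}$, while the top index $x(N)$ never enters the computation of $Y$. Keeping these ranges aligned---so that each $x(k)$ appearing in $y(k)$ is matched to the correct column block of $\mathcal{X}$ (or to the initial data)---is the only genuinely delicate point; once the alignment is fixed, the remainder is merely the linearity of $C$ and the relation $\hat C=CV$.
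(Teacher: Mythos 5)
Your proof is correct at the paper's own level of rigor, and it rests on the same two pillars as the paper's argument---Lemma \ref{lemma_walsh} (that is, $\mathcal{X}=V\hat{\mathcal{X}}$ and $x(j)=V\hat x(j)$) and the relation $\hat C=CV$---but the surrounding machinery is genuinely different. The paper never leaves the coefficient domain: it sums the output equation $y(k)=Cx(k)$ over $k=0,\dots,N-1$, substitutes the expansions, uses the shift and summation matrices to arrive at $YS = CQ_0 + C\mathcal{X}RS - C\mathcal{X}$, and then applies Lemma \ref{lemma_walsh} to identify the right-hand side with $\hat Y S$, whence $Y=\hat Y$ by invertibility of $S$. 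You instead argue pointwise in the time domain and transport back through the invertible Walsh transform $\mathcal{W}^{-1}=(1/N)\mathcal{W}$; this avoids $R$, $S$, and the summation property entirely, and your index bookkeeping (taking $x(0)$ from the initial-condition part of the lemma and $x(k)$, $k\ge1$, from $\mathcal{X}W(k-1)$) is exactly the right way to align the ranges. The one caveat worth naming: your chain $x(k)=\mathcal{X}W(k-1)=V\hat{\mathcal{X}}W(k-1)=V\hat x(k)$ treats the truncated expansions as exact identities for the true trajectories, which taken literally is false---$\mathcal{X}$ solves the operational equation (\ref{equ_solve_X}), not an exact representation of the state---so what your argument really establishes is that the Walsh \emph{reconstructions} of the full state and the lifted reduced state coincide, and hence so do their coefficient matrices. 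Since the paper adopts the identical convention (its step ``it can be rewritten as'' makes the same substitution inside the sum), this is not a gap relative to the paper's standard; but note that a pointwise equality $y(k)=\hat y(k)$ of the \emph{true} outputs would be far too strong a claim, and neither your proof nor the paper's establishes it---both prove equality of the coefficient matrices produced by the same formal recipe, which is the intended content of the theorem.
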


\begin{proof}
	The summation of $y(k)=Cx(k)$ from $0$ to $N-1$ is expressed as 
	\begin{equation*}
		\sum_{i=0}^{N-1}y(i)=\sum_{i=0}^{N-1}Cx(i)=Cx(0)+\sum_{i=1}^{N-1}Cx(i).
	\end{equation*}
	It can be rewritten as 
	\begin{equation*}
		\sum\limits_{i = 0}^{N - 1} {Y W\left( i \right)}  = C{Q_0}W\left( {N - 1} \right) 
		+ C\sum\limits_{i = 0}^{N - 1} {\mathcal{X} R W\left( i \right)}  - C\mathcal{X}R 
		W(0).
	\end{equation*}
	It follows from (\ref{walsh_sum}) that 
	\begin{equation*} 
		Y S W\left( N-1 \right)  = C{Q_0}W\left( {N - 1} \right) + C{\mathcal{X} RS 
		W\left( N-1 \right)}  - C\mathcal{X}R W(0).
	\end{equation*}
    Because of $R W(0)=W(N-1)$, we have 
	\begin{equation*}
		Y S  = C{Q_0}+ C\mathcal{X} RS - C\mathcal{X}.
	\end{equation*}
	By Lemma \ref{lemma_walsh}, there hold $\mathcal{X}=V\hat {\mathcal{X}}, x(0)=V\hat 
	x(0)$. The above equality reads 
	\begin{equation*}
		\begin{aligned}
		Y S  &= CV \hat Q_0+ CV\hat{\mathcal{X}} RS - CV\hat{\mathcal{X}}\\
		&=\hat C \hat Q_0+ \hat C\hat{\mathcal{X}} R S- \hat C\hat{\mathcal{X}}\\
		&=\hat Y S,
		\end{aligned}
	\end{equation*}
	which leads to $Y=\hat Y$ and concludes the proof. 
\end{proof}

\section{BT for MOR of discrete TDSs}

We consider BT methods of discrete TDSs with inhomogeneous initial 
conditions in this section. Due to the linearity of (\ref{tds_orig}), we first decompose 
the systems via the superposition principle, and then propose a new BT 
procedure based on the properly defined Gramians. 

\subsection{Decomposition of discrete TDSs}
The transfer function of (\ref{tds_orig}) is available by performing 
$Z$-transformation if the initial conditions $x(j)=0$ for $j\in[-d,0]$. However, the 
situation becomes more complex in the case of inhomogeneous initial 
conditions. We aim to decompose the system behavior of (\ref{tds_orig}) into 
several simple subsystems so as to define Gramians for systems with inhomogeneous initial 
conditions. The 
$Z$-transformation of $x\left( t \right), u\left( t \right)$ and $y(t)$ is defined via
\begin{equation*}
	X\left( z \right) = \mathcal Z [x(t)] = \sum\limits_{i = 0}^{\infty} x(i) z^{-i}, 
	U\left( z \right) = \mathcal Z [u(t)] = \sum\limits_{i = 0}^{\infty} u(i) z^{-i},
	Y\left( z \right) = \mathcal Z [y(t)] = \sum\limits_{i = 0}^{\infty} y(i) z^{-i},
\end{equation*}
respectively. There hold
\begin{align*}
	\mathcal Z [x(t + 1)] &= \sum\limits_{i = 0}^{\infty} x(i + 1) z^{-i} = \sum\limits_{l = 1}^{\infty} x(l) z^{-l + 1} 
					= z \sum\limits_{l = 0}^{\infty} x(l) z^{-l} - zx(0) 
					 = zX\left( z \right) - zx(0), \\
	\mathcal{Z}[x(t-d)]& =\sum_{i=0}^{\infty}x(i-d)z^{-i}  
				=\sum_{l=-d}^{\infty}x(l)z^{-(l+d)}  
				=z^{-d}\sum_{l=-d}^{\infty}x(l)z^{-l} \\
				&=z^{-d}\sum_{l=0}^{\infty}x(l)z^{-l} + {z^{ - d}}\sum\limits_{l = -d}^{- 
				1} {x\left( {l} \right){z^{ - l}}} \\
				&=z^{-d}X(z) + {z^{ - d}}\sum\limits_{l = -d}^{- 1} {x\left( {l} 
				\right){z^{ - l}}}. 
\end{align*}
Consequently, in the frequency domain (\ref{tds_orig}) are rewritten as 
\begin{equation*}
	zX\left( z \right) - zx\left( 0 \right) = {A_0}X\left( z \right) + {A_1}{z^{ - 
	d}}X\left( z \right) + 
	{A_1}{z^{ - d}}\sum\limits_{l = -d}^{- 1} {x\left( {l} \right){z^{ - l}}}  + BU\left( 
	z \right).
\end{equation*}
It follows that  
\begin{equation*}
	\begin{aligned}
		Y\left( z \right) &= CX\left( z \right)\\
		 &= C{\left( {zI - {A_0} - {z^{ - d}}{A_1}} \right)^{ - 1}}\left( {BU\left( z \right) + zx\left( 0 \right) + {A_1}{z^{ - d}}\sum\limits_{l = -d}^{- 1} {x\left( {l} \right){z^{ - l}}} } \right)\\
		 &= CF\left( z \right)BU\left( z \right) + CF\left( z \right)zx\left( 0 \right) + 
		 CF\left( z \right){A_1}{z^{ - d}}\sum\limits_{l = -d}^{- 1} {x\left( {l} 
		 \right){z^{ - l}}},
		\end{aligned}
\end{equation*}
where $F\left( z \right)=\left( {zI - {A_0} - {z^{ - d}}{A_1}} \right)^{ - 1}$. We 
restrict each  
initial value $x(j)$ to an individual subspace spanned 
by the basis matrix 
$X_{j}\in\mathbb{R}^{n\times n_0}$ for $j \in [-d,0]$.  
There is a vector $w_j\in\mathbb{R}^{n_0}$ such that $x(j) = X_{j} w_j$. Consequently, 
the input-output relationship of (\ref{tds_orig}) can be decomposed into three parts
\begin{equation*}\label{Ztrans_Y}
	\begin{aligned}
		Y\left( z \right) &= CF\left( z \right)BU\left( z \right) + CF\left( z 
		\right)zX_{0} w_0 + CF\left( z \right){A_1}{z^{ - d}}\sum\limits_{j = -d}^{-1} 
		{X_{j} w_j{z^{-j}}} \\
		&= H_{zero}(z)U(z) + H_{x_0}(z) w_0 +\sum\limits_{j=-d}^{-1}H_{neg}^j(z) w_j,
		\end{aligned}
\end{equation*}
where $H_{zero}(z) = CF\left( z \right)B$, $H_{x_0}(z) = CF\left( z \right)zX_{0}$, and 
$H_{neg}^j(z) = CF\left(z\right){A_1}{z^{-d}} {X_{j} {z^{-j}}}$ for $j\in[-d,-1]$.

We define auxiliary subsystems based on the above decomposition, and then the output of 
original systems is a superposition of these 
subsystems. Clearly, $H_{zero}(z)$ is the transfer function of the following discrete 
TDSs with homogeneous initial conditions
\begin{equation*}
	\Omega_{zero} : \left\{ \begin{aligned}
		&x_{zero}\left( {t + 1} \right) = A_0x_{zero}\left( t \right) + A_1x_{zero}\left( {t - d} \right) + Bu\left( t \right), \hfill \\
		&y_{zero}\left( t \right) = Cx_{zero}\left( t \right),t \in \mathbb{Z}\left[{0,\infty } \right), \hfill \\
		&x_{zero}\left( t \right) = 0,t \in \mathbb{Z}\left[ {- d,0} \right]. \hfill \\
	\end{aligned} \right. 
\end{equation*}
$H_{x_0}(z) = CF\left( z \right)zX_{0}$ corresponds to $\Omega_{x_0}$ along 
with just one non-zero initial condition, which is defined as
\begin{equation*}
	\Omega_{x_0} : \left\{ \begin{aligned}
		&x_{x_0}\left( {t + 1} \right) = A_0x_{x_0}\left( t \right) + A_1x_{x_0}\left( {t - d} \right), \hfill \\
		&y_{x_0}\left( t \right) = Cx_{x_0}\left( t \right),t \in \mathbb{Z}\left[{0,\infty } \right), \hfill \\
		&x_{x_0}(0)=x_0, x_{x_0}\left( t \right) = 0,t \in \mathbb{Z}\left[{- d,0} \right). \hfill \\
	\end{aligned} \right. 
\end{equation*}
Likewise, $H_{neg}^j(z) = CF\left( z \right){A_1}{z^{ - d}} {X_{j} {z^{ - j}}}$ 
corresponds to $\Omega_{neg}^j$ along with just one non-zero initial condition $x(j)$, 
which is defined as 
\begin{equation*}
	\Omega_{neg}^j : \left\{ \begin{aligned}
		&x_{neg}\left( {t + 1} \right) = A_0x_{neg}\left( t \right) + A_1x_{neg}\left( {t - d} \right), \hfill \\
		&y_{neg}\left( t \right) = Cx_{neg}\left( t \right),t \in \mathbb{Z}\left[{0,\infty } \right), \hfill \\
		&x_{neg}(j)=x(j), x_{neg}\left( t \right) = 0,t \in \mathbb{Z}\left[ {- d,0} 
		\right] \backslash  j. \hfill \\
	\end{aligned} \right. 
\end{equation*}

In what follows, we use the denotations $*_{zero}, *_{x_0}, *_{neg}$ to denote
variables associated with each subsystem defined above. One can verify that
$$Y\left( z \right) = Y_{zero}\left( z \right) + Y_{x_0}\left( z \right) + \sum\limits_{j 
= -d}^{- 1} Y_{neg}^j\left( z \right),$$
where $Y_{zero}\left( z \right), Y_{x_0}\left( z \right)$ and $Y_{neg}^j\left( z \right)$ 
represent $Z$-transformation of $y_{zero}(t), y_{x_0}(t)$ and the output of 
$\Omega_{neg}^j$, respectively. As a result, the output of (\ref{tds_orig}) is the 
summation of that of the subsystems $\Omega_{zero}, \Omega_{x_0}$ and $\Omega_{neg}^j$. 
If one construct reduced models for each subsystem properly, the dynamical behavior 
of (\ref{tds_orig}) would be well approximated by the superposition of reduced order 
subsystems. 

\subsection{MOR based on BT methods}

BT is an efficient approach to perform MOR. For discrete TDSs, the fundamental matrix is 
the focus of BT methods \cite{li2019lyapunov}.

\begin{definition}
	For the given discrete TDSs (\ref{tds_orig}) with homogeneous initial condition, the 
	solution matrix $\Psi \left( t \right) \in {\mathbb{R}^{n \times n}}$ of the 
	following differential equation
	\begin{equation}\label{fund_equa}
	\begin{array}{l}
	\Psi \left( {t + 1} \right) = {A_0}\Psi \left( t \right) + {A_1}\Psi \left( {t - d} 
	\right)\\
	\Psi \left( 0 \right) = I,\Psi \left( \tau  \right) = O\quad \tau  \in 
	\mathbb{Z}\left[ { - d,0} \right),
	\end{array}
	\end{equation}
	is called the fundamental matrix of (\ref{tds_orig}).
\end{definition}

Let $\mathcal P_{zero}$ and  $\mathcal Q_{zero}$ denote the controllable and observable 
Gramians of the subsystem $\Omega_{zero}$, respectively. They can be formulated as follows
\begin{equation}\label{Gram-sum}
	\mathcal P_{zero} = \sum\limits_{t = 0}^\infty  {\Psi \left( t \right)B{B^{\mathrm T}}{\Psi ^{\mathrm T}}\left( t \right)}, 
	\mathcal Q_{zero} = \sum\limits_{t = 0}^\infty  {{\Psi ^{\mathrm T}}\left( t \right){C^{\mathrm T}}C\Psi \left( t \right)}.
\end{equation}
(\ref{Gram-sum}) is well defined for the exponentially stable discrete TDSs. Clearly,
$\mathcal P_{zero} $ and $\mathcal Q_{zero}$ are positive semi-definite matrices.
The subsystem $\Omega_{zero}$ can be simplified by the standard BT approach presented in 
\cite{wang2022balanced}.

We now consider the subsystem $\Omega_{x_0}$. Due to the multiplier $z$, $H_{x_0}(z) = 
CF\left( z \right)zX_{0}$ is 
not a standard expression in the frequency domain, and it can not be 
simplified directly via the techniques designed for systems with homogeneous initial 
conditions. However, if one conducts the structure-preserving MOR for $\Omega_{x_0}$ 
by using projection methods, the resulting reduced subsystem would possess the 
similar expression in the frequency domain. Specifically, the multiplier $z$ still 
appears in the reduced subsystem of $\Omega_{x_0}$ in the frequency domain. Thanks to 
this observation. We then switch to 
the transfer function $$\bar H_{x_0}(z) = CF\left( z \right)X_{0}$$ for MOR of 
$\Omega_{x_0}$. Note that an accurate 
approximation to $\bar H_{x_0}(z)$ naturally results in a good approximation to 
$H_{x_0}(z)$ in the framework of the structure-preserving MOR. To this end, we define the 
following TDSs with homogeneous initial conditions
\begin{equation}\label{subsys-xo}
	\left\{ \begin{aligned}
		&x\left( {t + 1} \right) = A_0x\left( t \right) + A_1x\left( {t - d} \right) + X_0 u_{x_0}\left( t \right), \hfill \\
		&y\left( t \right) = Cx\left( t \right),t \in \mathbb{Z}\left[{0,\infty } \right), \hfill \\
		&x\left( t \right) = 0,t \in \mathbb{Z}\left[ {- d,0} \right]. \hfill \\
	\end{aligned} \right. 
\end{equation}
Let $\mathcal P_{x_0}$ and $\mathcal Q_{x_0}$ be the controllable Gramian and observable 
Gramian of (\ref{subsys-xo}), respectively. $\mathcal P_{x_0}$ and $\mathcal Q_{x_0}$ can 
be  
defined explicitly as 
\begin{equation*}
	\mathcal P_{x_0} = \sum\limits_{t = 0}^\infty  {\Psi \left( t \right)X_{0}{X_{0}^{\mathrm T}}{\Psi ^{\mathrm T}}\left( t \right)}, 
	\mathcal Q_{x_0} = \sum\limits_{t = 0}^\infty  {{\Psi ^{\mathrm T}}\left( t \right){C^{\mathrm T}}C\Psi \left( t \right)}.
\end{equation*}
Then one can assemble the projection matrices via the BT 
method based on (\ref{subsys-xo}), and the reduced subsystems of $\Omega_{x_0}$ can be 
produced via projection methods.

The subsystems $\Omega_{neg}^j$ can be manipulated similarly for $j\in[-d,-1]$. We turn 
to $$\bar H_{neg}^j(z) = CF\left(z\right){A_1} {X_{j}}$$ in order to approximate 
$H_{neg}^j(z)$. Note that $\bar H_{neg}^j(z)$ correspond to the auxiliary subsystems with 
homogeneous initial 
conditions and the input matrix $A_1X_j$. Specifically, the controllable Gramian 
and observable Gramian for the auxiliary subsystems are defined as 
\begin{equation*}
	\mathcal P_{neg}^j = \sum\limits_{t = 0}^\infty  {\Psi \left( t 
	\right){\left(A_1X_{j}\right)}{\left(A_1X_{j}\right)^{\mathrm T}}{\Psi ^{\mathrm 
	T}}\left( t 
	\right)}, 
	\mathcal Q_{neg}^j = \sum\limits_{t = 0}^\infty  {{\Psi ^{\mathrm T}}\left( t 
	\right){C^{\mathrm T}}C\Psi \left( t \right)}.
\end{equation*}

To summarize, one can apply the BT method to each subsystem based on the Gramians defined 
above, and approximate the output of (\ref{tds_orig}) by the superimposition of  
outputs of each reduced subsystem. However, it would be more advantageous to simplify 
(\ref{tds_orig}) as a whole with unified projection matrices in some applications. 
Moreover, a unified structure-preserving reduced models facilitate a lot the 
system synthesis in engineering. To this end, we propose a new controllable Gramian 
$\mathcal P$ by integrating the information coming from the 
controllable 
Gramians of subsystems, which is expressed as
\begin{equation}\label{P_orig}
	\begin{aligned}
	\mathcal P &= \mathcal P_{zero} + \mathcal P_{x_0} + \sum\limits_{j = -d}^{- 1} 
	\mathcal P_{neg}^j \\
	&= \sum\limits_{t = 0}^\infty  {\Psi \left( t \right)B{B^{\mathrm T}}{\Psi ^{\mathrm T}}\left( t \right)} + \sum\limits_{t = 0}^\infty  {\Psi \left( t \right)X_{0}{X_{0}^{\mathrm T}}{\Psi ^{\mathrm T}}\left( t \right)}
	+ \sum\limits_{j = -d}^{- 1} \sum\limits_{t = 0}^\infty  {\Psi \left( t 
	\right)\left(A_1X_{j}\right)\left(A_1X_{j}\right)^{\mathrm T}{\Psi ^{\mathrm 
	T}}\left( t \right)}.
	\end{aligned}
\end{equation}
The observable Gramian $\mathcal Q$ of (\ref{tds_orig}) is defined as
\begin{equation}\label{Q_orig}
	\mathcal Q= \mathcal Q_{zero} = \mathcal Q_{x_0} = \mathcal Q_{neg}^j.
\end{equation}
With the new Gramians (\ref{P_orig}) and (\ref{Q_orig}), BT methods can be applied 
directly to discrete TDSs with inhomogeneous initial conditions. 

\subsection{Low-rank approximation on Gramians for BT methods}

The main cost of BT methods is dominated by the calculation of Gramians, which always 
involves the solution of delay Lyapunov equations. The low-rank approximation to Gramians 
is extensively exploited for MOR of delay-free systems to reduce the computational costs. 
In this subsection, we present a strategy on the fast calculation of Gramians based on 
discrete Laguerre polynomials, which enables an efficient execution of our approach. 

We consider the truncated expansion of the fundamental matrix $\Psi \left( {t} 
\right)$ over discrete Laguerre polynomials
\begin{equation*}
	\Psi \left( t \right) \approx \sum\limits_{i = 0}^{k - 1} {{F_i}{L_i}\left( t 
	\right)},
\end{equation*}
where $F_i$ represent discrete Laguerre coefficients. It follows that 
\begin{equation*}
	\Psi \left( {t+1} \right) \approx\sum\limits_{i = 0}^{k - 1} {{F_i}{L_i}\left( {t+1} 
	\right)} ,\Psi \left( {t - d} \right) \approx\sum\limits_{i = 0}^{k - 1} 
	{{F_i}{L_i}\left( 
	{t - d} \right)}.
\end{equation*}
Substituting the above approximation into (\ref{fund_equa}) leads to
\begin{equation}\label{fur}
	\sum\limits_{i = 0}^{k - 1} {{F_i}{L_i}\left( {t + 1} \right)} {\rm{ = }}{A_0}\sum\limits_{i = 0}^{k - 1} {{F_i}{L_i}\left( t \right)}  + {A_1}\sum\limits_{i = 0}^{k - 1} {{F_i}{L_i}\left( {t - d} \right)}.
\end{equation}
Due to the shift property given in (\ref{shift_trans}), there holds 
\begin{equation*}
{L_i}\left( {t + 1} \right) = \sum\limits_{j = 0}^i {{{\hat g}_{ij}}} {L_j}\left( t \right), {L_i}\left( {t - d} \right) = \sum\limits_{j = 0}^i {{{\tilde g}_{ij}}} {L_j}\left( t \right),
\end{equation*}
where ${{{\hat g}_{ij}}}$  and ${{{\tilde g}_{ij}}}$ denote the $\left( {\left( {i + 1} 
\right),\left( {j + 1} \right)} \right)$-th element
of $T$ and ${T^{ - d}}$, respectively. Note that both $T$ and $T^{-d}$ are lower 
triangular 
matrices. Consequently, (\ref{fur}) is rewritten as 
\begin{equation*}
\sum\limits_{i = 0}^{k - 1} {{F_i}\sum\limits_{j = 0}^{k - 1} {{{\hat g}_{ij}}} 
{L_j}\left( t \right)} {\rm{ = }}{A_0}\sum\limits_{i = 0}^{k - 1} {{F_i}{L_i}\left( t 
\right)}  + {A_1}\sum\limits_{i = 0}^{k - 1} {{F_i}\sum\limits_{j = 0}^{k - 1} {{{\tilde 
g}_{ij}}} {L_j}\left( t \right)}.
\end{equation*}
By equating the coefficients of ${{L_i}\left( t \right)}$, we obtain
\begin{equation*}
\sum\limits_{j = i}^{k - 1} {{F_j}{{\hat g}_{ji}}}  = {A_0}{F_i} + {A_1}\sum\limits_{j = 
i}^{k - 1} {{F_j}{{\tilde g}_{ji}}}, \quad i = 0,1, \cdots ,k-1.
\end{equation*}
Besides, there holds $\Psi \left( 0 \right) \approx \sum\limits_{i 
= 0}^{k - 1} {{F_i}{L_i}\left( 0 \right)}  = I$ by using $\Psi \left( 0 \right) = I$.
With the denotation $\mathcal F = [F_0^ \mathrm T \enspace \cdots \enspace 
F_{k-1}^ \mathrm T]^ \mathrm T$, we get 
the linear equation 
\begin{equation}\label{lin-eqs-Lag}
\mathcal A \mathcal F = \mathcal B,
\end{equation} 
where the coefficient matrices are determined by 
\begin{equation*}
	{\mathcal A} = \left[ {\begin{array}{*{20}{c}}
		{{L_0}\left( 0 \right)I}&{{L_1}\left( 0 \right)I}& {{L_2}\left( 0 \right)I} & \cdots &\cdots&L_{k - 2}\left( 0 \right)I&L_{k - 1}\left( 0 \right)I\\
		\Delta_{0,0}&\Delta_{1,0}&{{\Delta_{2,0}}}& \ddots &\cdots&\Delta_{k - 2,0}&\Delta_{k - 1,0}\\
		0&\Delta_{1,1}&\Delta_{2,1}& \ddots &\cdots&\vdots& \vdots \\
		0&0&\ddots& \ddots & \ddots & \vdots& \vdots \\
		\vdots& \vdots & \ddots & \ddots & \ddots & \ddots & \vdots \\
		\vdots & \vdots & \vdots & \ddots & \Delta_{k - 3,k-3}& \Delta_{k - 2,k-3} & \Delta_{k - 1,k-3} \\
		0&0&\cdots& \cdots &0&\Delta_{k - 2,k - 2}&\Delta_{k - 1,k - 2}
\end{array}} \right],
\mathcal B = \left[ {\begin{array}{*{20}{c}}
		{I}\\
		0\\
		0\\
		\vdots \\
		\vdots  \\
		0 \\
		0
\end{array}} \right]
\end{equation*}
with the elements
\begin{equation*}
	\begin{aligned}
		{\Delta_{i,i}} &= {{\hat g}_{ii}}I - {A_0} - {{\tilde g}_{ii}}{A_1},i = 0,1,...,k 
		- 2, \\
		{\Delta_{j,i}} &= {{\hat g}_{ji}}I - {{\tilde g}_{ji}}{A_1}, i = 0,1,...,k - 2, j 
		= i+1,...,k - 
		1.
	\end{aligned}
\end{equation*}
One can obtain discrete Laguerre coefficients $F_i$ by solving the above linear 
equation. 

The derived $F_i$ immediately result in a low-rank approximation 
to Gramians defined in (\ref{P_orig}) and (\ref{Q_orig}). It follows from the 
orthogonality property in 
(\ref{Lag_time_orth}) that 
\begin{equation*}\label{timeuc0}
	\begin{aligned}
	\mathcal P_{zero} &= \sum\limits_{t = 0}^\infty  {\Psi \left( t \right)B{B^\mathrm T}{\Psi ^\mathrm T}\left( t \right)} \\
 	&\approx \sum\limits_{t = 0}^\infty  {\left( {\sum\limits_{i = 0}^{k - 1} {{F_i}{L_i}\left( t \right)} } \right)B{B^\mathrm T}{{\left( {\sum\limits_{i = 0}^{k - 1} {{F_i}{L_i}\left( t \right)} } \right)}^\mathrm T}} \\
 	&= \sum\limits_{t = 0}^\infty  {\left( {\sum\limits_{i = 0}^{k - 1} {{F_i}B{L_i}\left( t \right)} } \right){{\left( {\sum\limits_{i = 0}^{k - 1} {{F_i}B{L_i}\left( t \right)} } \right)}^\mathrm T}} \\
 	&= \left[ {{F_0}B \enspace {F_1}B \enspace \cdots {F_{k - 1}}B} \right]\left[{\begin{array}{c}
		{{{\left( {{F_0}B} \right)}^\mathrm T}}\\
		{{{\left( {{F_1}B} \right)}^\mathrm T}}\\
 		\vdots \\
		{{{\left( {{F_{k - 1}}B} \right)}^\mathrm T}}
		\end{array}} \right]\\
	&=\mathcal X_{zero}\mathcal X_{zero}^{\mathrm T},
	\end{aligned}
\end{equation*}
where $\mathcal X_{zero} = \left[ {{F_0}B \enspace {F_1}B \enspace \cdots {F_{k - 1}}B} 
\right]$. Similarly, we have the low-rank approximation 
\begin{equation*}
	\mathcal P_{x_0}\approx \mathcal X_{x_0} \mathcal X_{x_0}^\mathrm T, \mathcal 
	P_{neg}^j\approx \mathcal X_{neg}^j (\mathcal X_{neg}^j)^\mathrm T,
\end{equation*}
where $\mathcal X_{x_0} = \left[ {{F_0}X_{x_0} \enspace {F_1}X_{x_0} 
\enspace \cdots {F_{k - 1}}X_{x_0}} \right]$ and 
$\mathcal X_{neg}^j = \left[ {{F_0}A_1X_{j} \enspace {F_1}A_1X_{j} \enspace \cdots 
{F_{k-1}}A_1X_{j}} \right]$ for $j\in [-d,-1].$ Consequently, the controllable Gramian 
$\mathcal P$ of (\ref{tds_orig}) is approximated by 
\begin{equation}\label{P_orig_dec}
\begin{aligned}
	\mathcal P &= \mathcal P_{zero} + \mathcal P_{x_0} + \sum\limits_{j = -d}^{- 1} 
	\mathcal P_{neg}^j\\
	&\approx \mathcal X_{zero} \mathcal X_{zero}^\mathrm T + \mathcal X_{x_0} \mathcal 
	X_{x_0}^\mathrm T + \sum\limits_{j = -d}^{- 1} \mathcal X_{neg}^j (\mathcal 
	X_{neg}^j)^\mathrm T\\
	&=\mathcal X_{in}\mathcal X_{in}^{\mathrm T},
\end{aligned}
\end{equation}
where $\mathcal X_{in}=[\mathcal X_{zero}\enspace \mathcal X_{x_0}\enspace \mathcal 
X_{neg}^{-1}  \enspace \cdots \enspace \mathcal X_{neg}^{-d}]$. The observable Gramian 
$\mathcal Q$ of (\ref{tds_orig}) has the low-rank approximation 
\begin{equation}\label{Q_orig_dec}
	\mathcal Q = \mathcal Q_{zero} = \mathcal Q_{x_0} = \mathcal Q_{neg}^j\approx 
	\mathcal Y_{out} \mathcal Y_{out}^\mathrm T,
\end{equation}
where $\mathcal Y_{out} = \left[ {{C{F_0}} \enspace {C{F_1}} \enspace \cdots \enspace 
{C{F_{k - 
1}}}} \right]$. 
In Algorithm \ref{alg:conbBT}, we present the main steps of the proposed BT procedure 
with the low-rank approximation. 
\begin{algorithm}[h]
	\caption{BT combined with low-rank Gramians (MOR-Lag-conbBT)}
	\label{alg:conbBT}
	\begin{algorithmic}[1]
		\Require Coefficient matrices $A_0$, $A_1$, $B$, $C$, delay $d$, parameters $k$ 
		and $\alpha$, initial conditions $x(j)\in \mathrm{colspan}\{X_j\}$ 
		\vspace{1ex}
		\Ensure Reduced coefficient matrices $\hat A_{0}$, $\hat A_{1}$, $\hat B$, $\hat 
		C$, reduced initial conditions $\hat x(j)$
		\vspace{1ex}
		\State Calculate the discrete Laguerre coefficients $F_i$ via linear 
		equation (\ref{lin-eqs-Lag}); 
		\vspace{1ex}
		\State Calculate the low-rank factors $\mathcal{P}\approx\mathcal X_{in} \mathcal 
		X_{in}^{\mathrm T}$ and 
		$\mathcal{Q} \approx \mathcal Y_{out} \mathcal Y_{out}^{\mathrm T}$ by 
		(\ref{P_orig_dec}) and (\ref{Q_orig_dec});
		\vspace{1ex}
		\State  Perform the truncated singular value decomposition $\mathcal 
		Y_{out}^{\mathrm T} \mathcal X_{in}= \mathcal{U} \Sigma \mathcal{V}^{\mathrm T},$
		where the diagonal elements of $\Sigma$ are arranged in the decreasing order of  
		values.
		Assemble the first $r$ columns of $\mathcal U, \mathcal V$ into matrices $T_{1r}$ 
		and $T_{2r}$, respectively, and the first $r\times r$ block 
		of $\Sigma$ is referred as $\Sigma _r$;
		\vspace{1ex}
		\State Construct projection matrices ${W^{\mathrm T}} = \Sigma _r^{ - 
		1/2}T_{1r}^{\mathrm T}{\mathcal Y_{out}^{\mathrm T}}, V = \mathcal X_{in} 
		{T_{2r}}\Sigma _r^{-1/2};$
		\vspace{1ex}
		\State Compute coefficient matrices of reduced models
		$\hat A_{0} = W^{\mathrm T}A_0V, \hat A_{1} = W^{\mathrm T}A_1V, \hat B = 
		W^{\mathrm T}B,  \hat C= CV, \hat x(j) = W^\mathrm{T}x(j).$
	\end{algorithmic}
\end{algorithm}

\begin{remark}
Generally, reduced models generated by Algorithm \ref{alg:conbBT} can not preserve the 
stability of (\ref{tds_orig}). Alternatively, one can switch to a variation of 
the 
dominant subspace method proposed in 
\cite{penzl2006algorithms} to derive stable reduced models in theory.
With the derived low-rank factors $\mathcal{X}_{in}, \mathcal{Y}_{out}$, an 
enlarged matrix $\mathcal{Z}=[\mathcal{X}_{in} \enspace \mathcal{Y}_{out}]$ can be
assembled first. By performing the singular value decomposition $\mathcal{Z}=U\Sigma 
V^{\mathrm 
T}$, one can define the projection matrix $S=U(:, 1:r)$, and then construct reduced 
models determined by coefficient matrices $\{S^{\mathrm T}A_0S,S^{\mathrm T}A_1S, 
S^{\mathrm T}B, CS\}$, which are stable under some conditions \cite{suh1999diagonal}.
\end{remark}

\section{Numercial examples}\label{sec:sec-4}
In this section, three numerical examples are used to illustrate the effectiveness of the 
proposed
methods. All simulation results are obtained in Matlab (R2021a) on a laptop with AMD Ryzen 7
5800U with Radeon Graphics 1.90 GHz and 16GB RAM.

In the simulation, we carry out Algorithm \ref{alg:nonwalsh} and Algorithm 
\ref{alg:conbBT} to produce the reduced models
MOR-Walsh-TDS and MOR-Lag-combBT, respectively. The approach presented in 
\cite{wang2022discrete} is conducted for comparison. Note that Ref. 
\cite{wang2022discrete} focuses on systems with inhomogeneous initial conditions but 
free of time delays. We first reformulate (\ref{tds_orig}) as the equivalent form defined 
in 
(\ref{tds_to_linear}), and then construct projection matrix $\hat V$ by the scheme 
provided in 
\cite{wang2022discrete}. In order to backtrack to TDSs eventually, we take the partition 
$\hat{V}=[V_0^\mathrm{T}\quad 
V_1^\mathrm{T}\quad\cdots\quad V_d^\mathrm{T}]^\mathrm{T}$, and the reduced 
models are determined by the coefficients $$\hat A_{0}=V_0^\mathrm{T}A_0V_0, \hat 
A_{1}=V_0^\mathrm{T}A_1V_d, \hat 
B=V_0^\mathrm{T}B, \hat C=CV_0,$$ along with $\hat x(j)=V_{-j}^\mathrm T x(j)$ for $j \in 
[-d, 
0]$, 
which are 
referred as MOR-Walsh-TtL. Note that if we use $N$ to denote the number of 
discrete Walsh coefficients to be matched, the dimension of MOR-Walsh-TtL and 
MOR-Walsh-TDS typically are $N+1$ and $N+d$, respectively. In addition, the low-rank 
version of BT method (MOR-Lag-GramBt) given in \cite{wang2024model} is also applied to 
(\ref{tds_orig}) for comparison, which is designed for systems with zero initial 
conditions.

\subsection{Controlled platoon example}\label{exm1}

A controlled platoon of vehicles  can be described by a continuous TDS 
\cite{huang1999automatic}
	\begin{equation*}
		\begin{gathered}
			\dot x\left( t \right) = {{\bar A}_0}x\left( t \right) + {{\bar A}_1}x\left( 
			{t - \tau} \right) + \bar Bu\left( t \right), \hfill \\
			y\left( t \right) = \bar Cx\left( t \right). \hfill \\ 
		\end{gathered}
	\end{equation*}
The platooning problem is closely related to autonomous driving which helps a lot to 
reduce the human error factor and increase
safety. The use of this technology may be possible in the immediate future.
For more background materials and the specific formulas of the system, we refer the 
reader to 
\cite{scarciotti2014model}.
Here we use the forward difference to discretize the TDS with the time step $\Delta t = 
0.005$, and obtain a discrete TDS
\begin{equation*}
	\begin{gathered}
		x\left( {k + 1} \right) = {A_0}x\left( k \right) + {A_1}x\left( {k - d} 
		\right) + Bu\left( k \right), \hfill \\
		y\left( k \right) = Cx\left( k \right), \hfill \\ 
	\end{gathered}
\end{equation*}
where the coefficient matrices are ${A_0} = I + {{\bar A}_0}\Delta t$, ${A_1} = {{\bar 
A}_1}\Delta t$, $B = 
\bar B\Delta t = [0 \enspace 0 \enspace \cdots \enspace 0 \enspace 0.05]^\mathrm T$ and 
$C = \bar C = [1 \enspace 0 \enspace 0 \enspace | \enspace 1 \enspace 0 \enspace 0 
\enspace | \cdots | \enspace 0 \enspace 0]$.
The dimension of the system is $n=512$ with the delay $d=1$. 

\begin{figure}[htb]
	\centering
	\begin{minipage}{0.49\textwidth} 
		\centering 
		\includegraphics[width=0.98\textwidth]{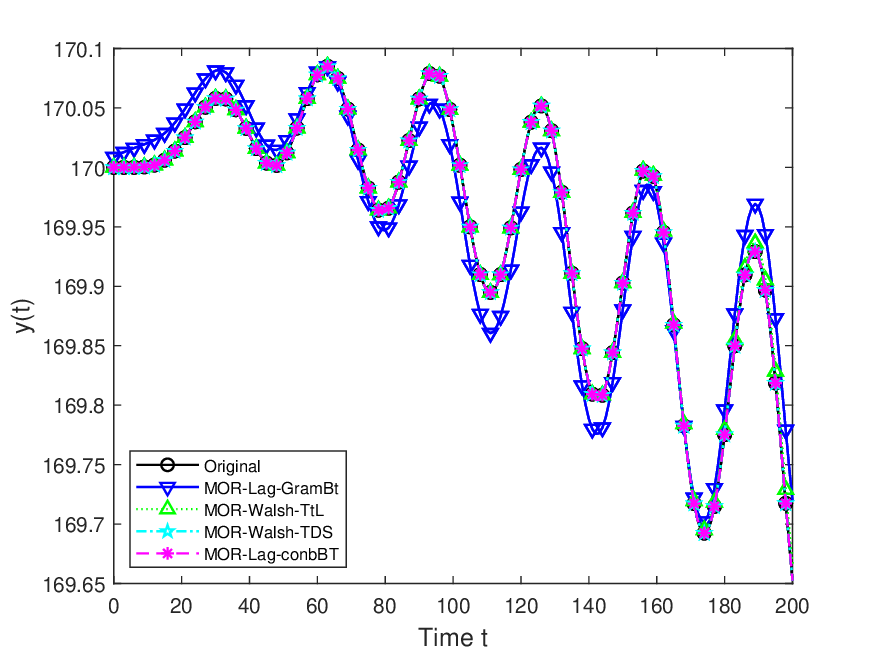}
	\end{minipage}
	\begin{minipage}{0.49\textwidth} 
		\centering 
		\includegraphics[width=0.98\textwidth]{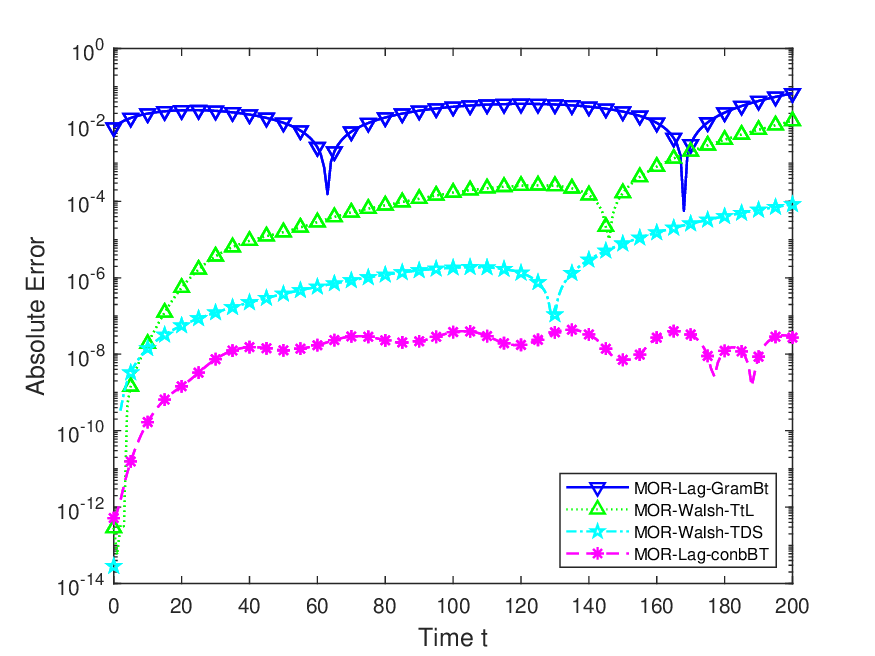}
	\end{minipage}
	\caption{Left: Outputs in the time domain; Right: Absolute errors for each method}
	\label{fig1}
\end{figure}

We take the initial conditions $x(0)=C^\mathrm T, x(-1)=[1 \enspace 0 \enspace \cdots 
\enspace 0]^\mathrm T$ for this example. In the simulation, the dimension of reduced 
models MOR-Lag-combBT and MOR-Lag-GramBt is 16, and $N=16$ is used for the reduced 
models 
MOR-Walsh-TtL and MOR-Walsh-TDS. With the input function $u\left( t \right) = 5t\sin 
\left( {0.2t} \right)$, Figure 
\ref{fig1} depicts the output of each reduced model in the 
time domain. Clearly, MOR-Lag-GramBt exhibits remarkable deviation from the time domain 
response of the original system, because there is no any information on the initial 
conditions taken into the process of MOR. The other reduced models perform very well for 
this example, and we hardly distinguish them clearly from the time domain response. 
MOR-Lag-combBT shows the best accuracy for this example in terms of the absolute error 
plots, and MOR-Walsh-TDS takes on slightly better approximation compared with 
MOR-Walsh-TtL. Table \ref{table1} lists the CPU time spent on the construction of each 
reduced model. As MOR-Walsh-TtL attributes to high order equivalent system 
(\ref{tds_to_linear}), it takes much more CPU time to produce the reduced models. 
MOR-Lag-GramBt takes the least CPU time but exhibits poor accuracy.

\begin{table}[!ht]
	\centering
	\caption{CPU time of Example \ref{exm1}}\label{table1}
	\begin{tabular}{cccccc}
		\hline
		Method & MOR-Lag-GramBt & MOR-Walsh-TtL & MOR-Walsh-TDS & MOR-Lag-combBT \\ \hline
		time & 0.1958s & 20.7464s & 3.6816s & 3.4600s \\ \hline
	\end{tabular}
\end{table}

\subsection{Convection-diffusion equation}\label{exm2}

In this example, we consider a convection-diffusion equation defined on the unit square 
$\Omega=(0,1)\times(0,1)$ along with multiple time delays
\begin{equation*}
	\begin{aligned}
	&\frac{\partial x}{\partial t}(t,\varsigma)=\Delta x(t,\varsigma)+\nabla x(t,\varsigma)+f(t,\varsigma,\tilde{\tau}_1,\tilde{\tau}_2)+b(\varsigma)u(t),t\in(0,1),\\
	&x(t,\varsigma)=0,\varsigma\in\partial\Omega,
	\end{aligned}
\end{equation*}
where $\varsigma=[\varsigma_1,\varsigma_2]^\mathrm T\in\Omega, f(t,\varsigma,\tilde{\tau}_1,\tilde{\tau}_2)=f_1(\varsigma)x(t-\tilde{\tau}_1,\varsigma)+f_2(\varsigma)x(t-\tilde{\tau}_2,\varsigma)$
with $f_1(\varsigma)=\sin (\varsigma_1 \pi)$ and $f_2(\varsigma)=\cos (\varsigma_2 \pi)$, 
as presented in \cite{baur2011interpolatory, xu2023model}.
We discretize the convection-diffusion equation with finite differences by an equidistant space step size $\Delta h = 1 / (h + 1)$ and time step size $\Delta t = 0.1 h^2$.
A discrete TDS with two delays is formulated as follows
\begin{equation*}
	\begin{gathered}
		x\left( {k + 1} \right) = {A_0}x\left( k \right) + {A_1}x\left( {k - {d_1}} 
		\right) + {A_2}x\left( {k - {d_2}} \right) + Bu\left( k \right), \hfill \\
		y\left( k \right) = Cx\left( k \right), \hfill \\ 
	\end{gathered}
\end{equation*}
where $B=e_1 \Delta t, C = [1 \enspace 1 \enspace \cdots \enspace 1]$ and $d_1 = 1, d_2 = 
2$ in our setting. The dimension of the discrete TDSs is $n = h^2$.

\begin{figure}[htb]
	\centering
	\begin{minipage}{0.49\textwidth} 
		\centering 
		\includegraphics[width=0.98\textwidth]{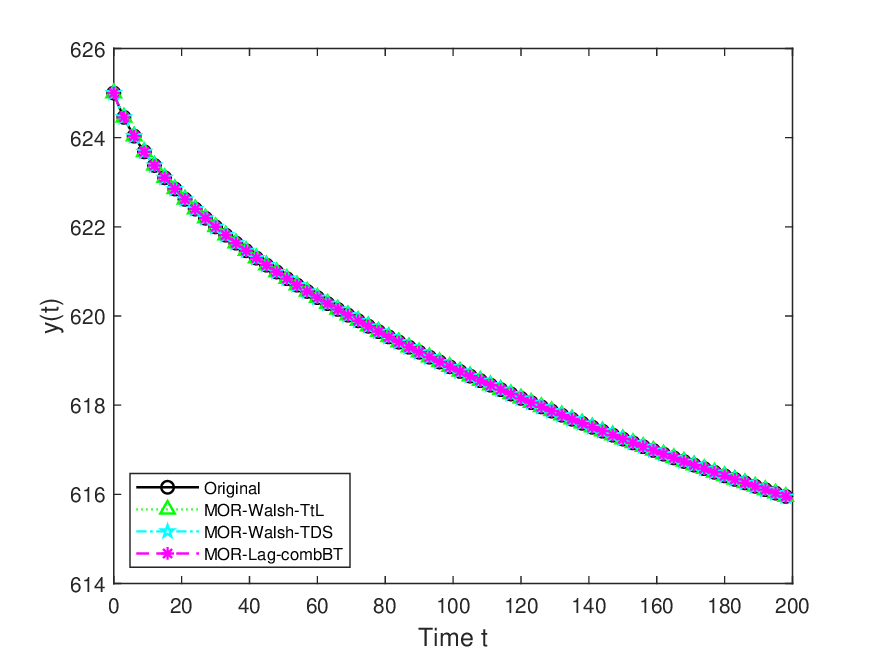}
	\end{minipage}
	\begin{minipage}{0.49\textwidth} 
		\centering 
		\includegraphics[width=0.98\textwidth]{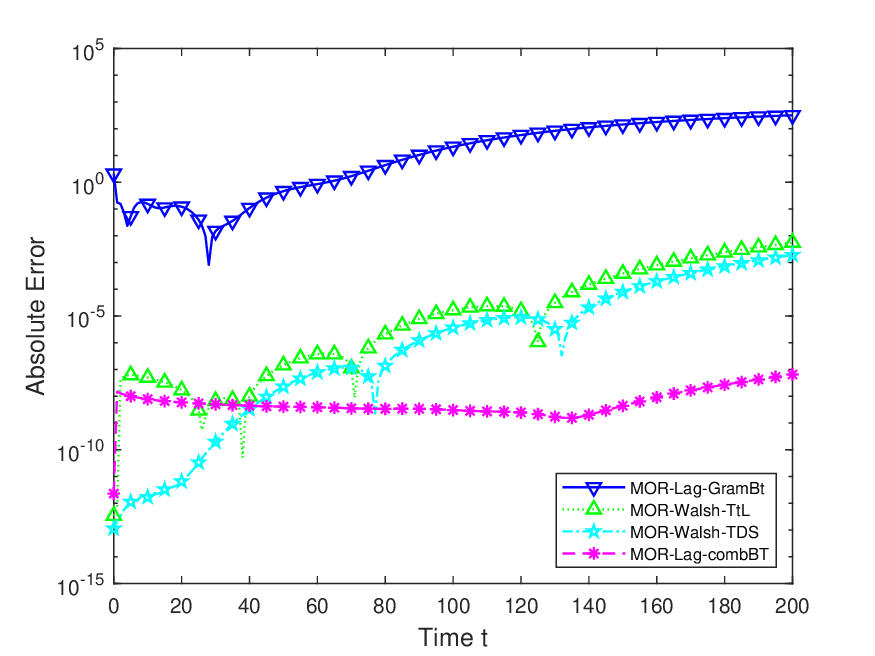}
	\end{minipage}
	\caption{Left: Outputs in the time domain; Right: Absolute errors for each method}
	\label{fig2}
\end{figure}

In the simulation we set $h = 25$, which yields a discrete TDS of order $n = 625$. With 
the initial conditions $x(0)=[1 \enspace 1 \enspace \cdots \enspace 1]^\mathrm T, 
x(-j)=e_j$ for $j\in [1,2]$, the time domain response of systems is plotted in Figure 
\ref{fig2} for the input function $u\left( t \right) = 5t\sin \left( {0.2t} \right)$. The 
dimension of MOR-Lag-GramBt and MOR-Lag-combBT is 16, and $N=16$ is used for the 
construction of other reduced models. Because the transient response of MOR-Lag-GramBt 
deviates largely from the true system response as the time elapses, we drop it in the 
response plots for this example. The accuracy of MOR-Walsh-TDS is competitive with 
that of MOR-Walsh-TtL in the simulation, and MOR-Lag-combBT generated by the proposed 
method can match the system output with much better accuracy. Table \ref{table2} 
indicates that MOR-Lag-combBT costs slightly less CPU time compared with MOR-Walsh-TDS, 
because the dimension of linear equations involved in Algorithm \ref{alg:conbBT} is lower 
than that of Algorithm \ref{alg:nonwalsh}. Much more CPU is spent by MOR-Walsh-TtL due 
the rise of system order in (\ref{tds_to_linear}).

\begin{table}[!ht]
	\centering
	\caption{CPU time of Example \ref{exm2}}\label{table2}
	\begin{tabular}{cccccc}
		\hline
		Method & MOR-Lag-GramBt & MOR-Walsh-TtL & MOR-Walsh-TDS & MOR-Lag-combBT \\ \hline
		time & 0.2458 & 144.3298s & 8.5971s & 6.6296s \\ \hline
	\end{tabular}
\end{table}

\subsection{Rod with a distributed heating source}\label{exm3}

We consider a specific partial differential equation with two delays
	\begin{equation*}
		\begin{gathered}
			\frac{{\partial v}}{{\partial t}} = \frac{{{\partial ^2}v}}{{\partial {x^2}}} 
			+ {a_0}\left( x \right)v\left( {x,t} \right) + {a_1}\left( x \right)v\left( 
			{x,t - {\tau_1}} \right) + {a_2}\left( x \right)v\left( {x,t - {\tau_2}} 
			\right) + 
			{h}\left( t \right), \hfill \\
			x \in \left[ {0,\pi } \right],\quad t \geqslant 0, \hfill \\
			v\left( {0,t} \right) = v\left( {\pi ,t} \right) = 0,\quad t \geqslant 0, 
			\hfill \\ 
		\end{gathered} 
	\end{equation*}
	where ${a_0}\left( x \right) = \sin \left( x \right), {a_1}\left( x \right) = 
	{10^4}\cos \left( x \right)$ and ${a_2}\left( x \right) = {10^4}\sin \left( x 
	\right)$. This example is adopted extensively in the field of 
	MOR for TDSs \cite{michiels2011krylov, zhang2013memory}, which characterizes a rod with a 
	distributed heating source, two weighted delayed feedbacks and an external control.
	We take the spatial discretization with the equidistant space size $\Delta x = 
	\frac{{0.01\pi }}{{n + 1}}$ and set
	${x_j} = j\Delta x$, and the time discretization is performed  
	with the time step $\Delta t = 0.01{\left( {\Delta x} \right)^2}$, leading to a 
	discrete TDS with two inputs and two outputs. We 
	set the dimension $n=1500$ and $d_1=2, d_2=4$ in our simulation.

	\begin{figure}[htb]
		\centering
		\begin{minipage}{0.49\textwidth} 
			\centering 
			\includegraphics[width=0.98\textwidth]{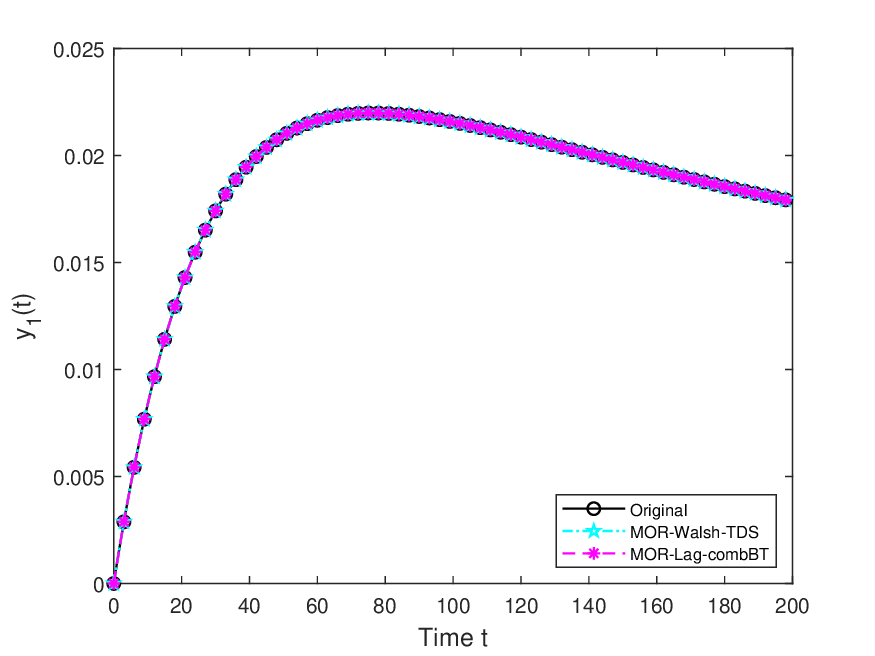}
		\end{minipage}
		\begin{minipage}{0.49\textwidth} 
			\centering 
			\includegraphics[width=0.98\textwidth]{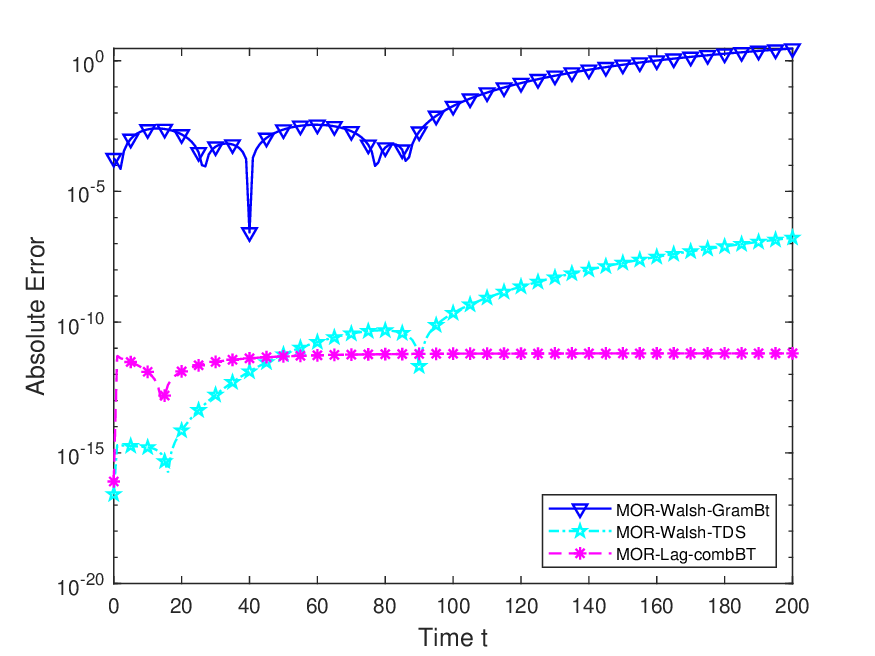}
		\end{minipage}
		\caption{Left: the output $y_1(t)$ in the time domain; Right: Absolute errors for 
		each method}
		\label{fig3}
	\end{figure}

\begin{figure}[htb]
	\centering
	\begin{minipage}{0.49\textwidth} 
		\centering 
		\includegraphics[width=0.98\textwidth]{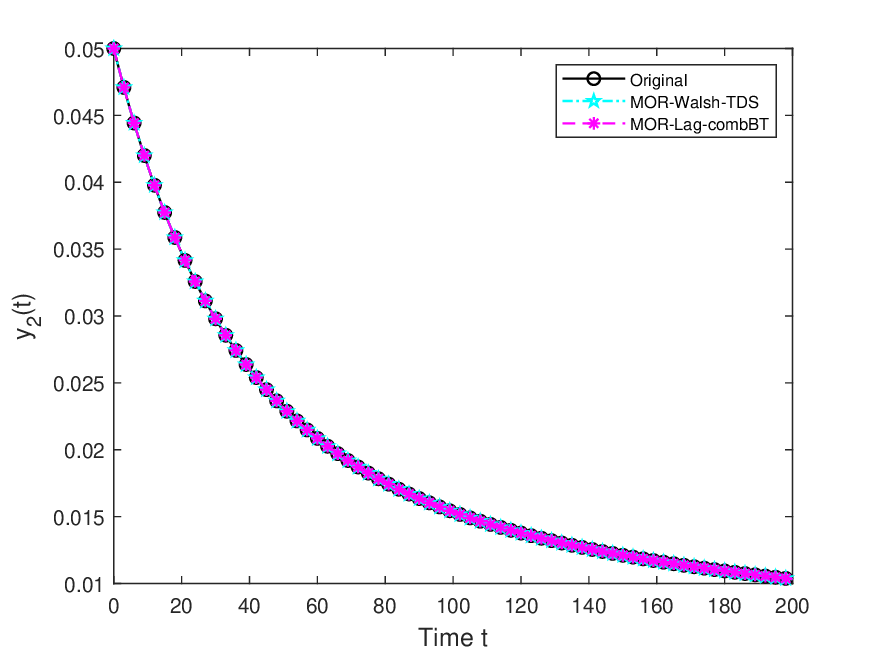}
	\end{minipage}
	\begin{minipage}{0.49\textwidth} 
		\centering 
		\includegraphics[width=0.98\textwidth]{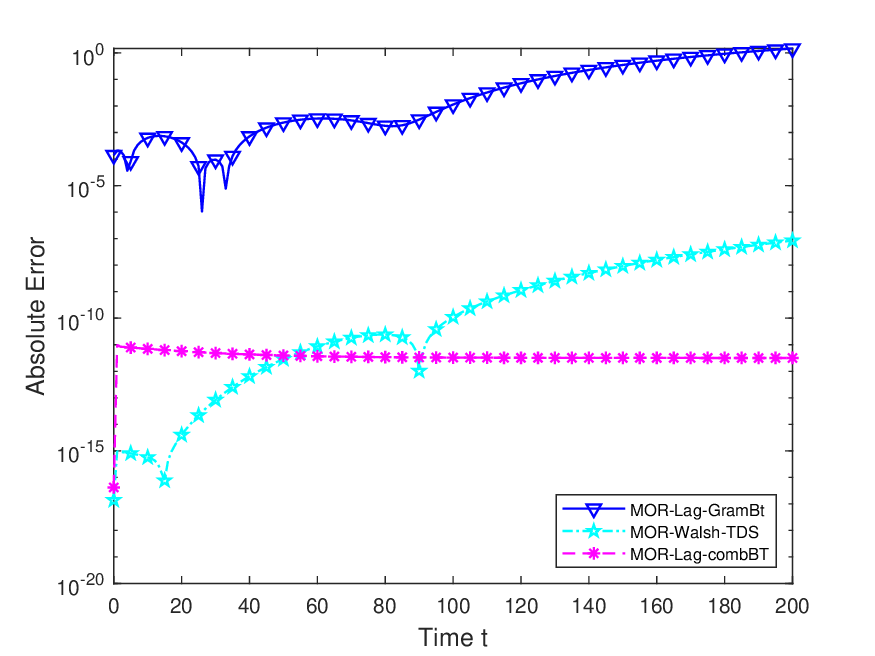}
	\end{minipage}
	\caption{Left: the output $y_2(t)$ in the time domain; Right: Absolute errors for 
	each method}
	\label{fig4}
\end{figure}
	
	We use $x(0)=e_n, x(-i)=e_j$ for $j \in [1,2,3,4]$ in this example. We adopt $N=16$, 
	and the dimension of MOR-Walsh-TtL and MOR-Walsh-TDS is $17$ and $20$, respectively. 
	MOR-Lag-GramBt and MOR-Lag-combBT are of order $16$. Figure 
	\ref{fig3} and \ref{fig4} display the outputs and errors of each reduced model for 
	the given input 
	$u(t) = [0.05t\sin(0.2t) \quad \mathrm{e}^{-0.2t}]^{\mathrm T}$. Note that 
	MOR-Walsh-TtL fails 
	to provide numerical results for this example due to the lack of memory, and 
	MOR-Lag-GramBt exhibits obvious deviation which is not provided in the output plots. 
	Clearly, the proposed two reduction methods take on 
	satisfactory accuracy in the simulation. Table \ref{table3} shows the CPU time spent 
	in constructing reduced models, in which 
	MOR-Lag-combBT costs less time compared with MOR-Walsh-TDS.

	\begin{table}[!ht]
		\centering
		\caption{CPU time of Example \ref{exm3}}\label{table3}
		\begin{tabular}{cccccc}
			\hline
			Method & MOR-Lag-GramBt & MOR-Walsh-TtL & MOR-Walsh-TDS & MOR-Lag-combBT \\ \hline
			time & 2.1002s & --- & 84.3798s & 77.0456s \\ \hline
		\end{tabular}
	\end{table}

\section{Conclusions}\label{sec:sec-5}

We have presented two kinds of MOR approach for discrete TDSs with inhomogeneous initial 
conditions. One is based on the expansion of systems over Walsh functions. By integrating 
initial conditions into the process of MOR, the resulting reduced models preserve exactly 
some 
discrete 
Walsh coefficients of the original systems. The other is based on the decomposition of 
systems. By defining new Gramians for the non-zero initial problem, the BT method is 
applied to discrete TDSs with inhomogeneous initial 
conditions. The numerical results indicate that the low-rank approximation to Gramians 
with the aid of discrete Laguerre polynomials is highly efficient, which dramatically 
reduces the CPU time of the BT method. The proposed methods have higher accuracy in some 
cases. It is indispensable to take into account the non-zero initial conditions for MOR 
of systems with inhomogeneous initial conditions.


\section*{Acknowledgements}
\noindent This research was supported by the Natural Science Foundation of China (NSFC) 
under grants 11871400. 

\section*{Declarations}
\noindent {\bf Conflict of Interest} The authors declared that they have no 
conflict of interest. 

\appendix
\setcounter{figure}{0}



\addcontentsline{toc}{section}{Reference}
\markboth{Reference}{}
\bibliographystyle{elsarticle-num}
\bibliography{reference}

\end{document}